\newtheorem{theorem}{Theorem}[section]
\newtheorem{notation}[theorem]{Notation}
\newtheorem{prop}[theorem]{Proposition}
\newtheorem{as}[theorem]{Assumption}
\newtheorem{remark}[theorem]{Remark}
\theoremstyle{definition}
\newtheorem{definition}[theorem]{Definition}
\newtheorem{example}{Example}[section]
\numberwithin{equation}{section}
\title[Hypoelliptic heat kernels on nilpotent Lie groups]{Hypoelliptic heat kernels on nilpotent Lie groups}
\author[Asaad]{Malva Asaad{$^{\dagger}$}}
\thanks{\footnotemark {$\dagger$} This research was supported in part by NSF Grant DMS-1007496.}
\address{Department of Mathematics\\
University of Connecticut\\
Storrs, CT 06269, USA} \email{malva.asaad@uconn.edu}
\author[Gordina]{Maria Gordina{$^{\dagger}$}}
\thanks{\footnotemark {$\dagger$} This research was supported in part by NSF Grant DMS-1007496.}
\address{Department of Mathematics\\
University of Connecticut\\
Storrs, CT 06269, USA} \email{maria.gordina@uconn.edu}
\keywords{
nilpotent group, sub-Riemannian manifold, hypoelliptic heat kernel, Kirillov's orbit method}
\subjclass{Primary 58J35; Secondary 53C17}
\begin{document}

\begin{abstract}
The starting point of our analysis is an old idea of writing an eigenfunction expansion for a heat kernel considered in the case of a hypoelliptic heat kernel on a nilpotent Lie group $G$. One of the ingredients of this approach is the generalized Fourier transform. The formula one gets using this approach is explicit as long as we can find all unitary irreducible representations of $G$. In the current paper we consider an $n$-step nilpotent Lie group $G_{n}$ as an illustration of this techinique. First we apply Kirillov's orbit method to describe these representations for $G_{n}$. This allows us to write the corresponding hypoelliptic heat kernel using an integral formula over a Euclidean space. As an application, we describe a short-time behaviour of the hypoelliptic heat kernel in our case.
\end{abstract}

\maketitle

\tableofcontents

\section{Introduction}\label{s.1}

The subject of this paper is a non-commutative variation on the classical idea of finding an eigenfunction expansion of the heat kernel. We refer to \cite{Grigoryan1999b} for an excellent review of the subject and its connections to heat kernel estimates on Riemannian manifolds. The heat kernel we consider is the hypoelliptic heat kernel for a sub-Laplacian on a nilpotent Lie group equipped with a natural sub-Riemannian structure. Recall that this setting provides a classical model for approximating of hypoelliptic operators first introduced by L.~Rothschild and E.~Stein in \cite{RothschildStein1976}.

Our goal is to describe such a hypoelliptic heat kernel in a way that allows to study further its properties such as short and long time behavior of such kernels. An eigenfunction expansion for the hypoelliptic heat kernel on $\operatorname{SU}\left( 2 \right)$ has been studied in \cite{BaudoinBonnefont2009}, and a more abstract expansion has been presented in \cite{AgrachevBoscainGauthierRossi2009}. The latter is based on a generalized Fourier transform (GFT) which informally speaking can be viewed as a generalized eigenfunction expansion. This requires a complicated machinery such as direct integrals, therefore applying it to an object such as an unbounded operator is not possible in general. This fundamental difficulty has not been addressed in \cite{AgrachevBoscainGauthierRossi2009}, but we make it precise in the case of a connected simply connected nilpotent Lie group $G$. We also want to point out that even though there are choices to be made for an appropriate measure on a general sub-Riemannian manifold, the use of the GFT forces one to use a Haar measure. This complicated interplay between algebraic and sub-Riemannian structure in the case of Lie groups still needs more study.

The main ingredient in our analysis is Kirillov's orbit method. This technique allows us to describe explicitly the unitary dual of $G$, that is, the space of equivalency classes of irreducible unitary representations of $G$. We address the domain issue in Theorem \ref{t.4.10} which does not assume that $G$ is nilpotent. But the description using the orbit method reduces this abstract description to function spaces over a Euclidean space as can be seen in the proof of Theorem \ref{t.4.11}. This is our main result, and the explicit formula for the hypoelliptic heat kernel on a nilpotent group in this theorem is the result of applying the GFT to the hypoelliptic Laplacian on $G$, and then using the inversion formula for this transform. In particular, this formula uses a heat kernel for a certain differential operator on a Euclidean space. This operator is a second order differential operator with  polynomial coefficients and a polynomial potential. We present the ingredients of both Kirillov's orbit method and this Schr\"{o}dinger-like operator in the case of the Heisenberg group and an $n$-step nilpotent group. In these cases this operator is the Schr\"{o}dinger operator with a  quadratic potential on a certain $\mathbb{R}^{N}$. For related results we refer to \cite{HulanickiJenkins1984, HulanickiJenkins1987}. The dimension of this Euclidean space and the degree of the potential depend on the structure of $G$ in terms of the orbit method. This connection with the heat kernel for such operators provides a way to prove bounds and functional inequalities for the hypoelliptic heat kernel. We only mention a simple estimate in the paper, but we expect that this approach will be used in the future. Moreover, it would be interesting to see if the short time estimates one gets from this analysis can be formulated in terms of the sub-Riemannian distance. Our main new example is an $n$-step nilpotent group, though we also mention previously studied examples, such as the Heisenberg group.

The paper is organized as follows. In Section \ref{s2} we review Kirillov's orbit method for nilpotent groups and illustrate it by  describing explicitly the irreducible representations of the $n$-step nilpotent Lie group of the growth vector $(2, 3, ... , n+1)$, $G_{n+1}$. This turns out to be an inductive generalization of the computations carried out in \cite{KirillovOrbitMethodAMSBook} for the Heisenberg group. In Section \ref{s3}, we recall basic definitions from sub-Riemannian geometry, in particular for left-invariant structures on Lie groups. In Section \ref{s4}, we use the generalized Fourier transform and its properties to describe the hypo-elliptic heat kernel on a nilpotent group. Finally in Section \ref{s4}, we  apply this technique to the group $G_{n+1}$. In Section \ref{s6} we use the formula for the hypo-elliptic kernel to study the short-time behaviour of heat kernels. In that we are motivated by the approach introduced by S\'eguin and Mansouri \cite{SeguinMansouri2012}. Note that we avoid introducing assumptions as in \cite[p.3904]{SeguinMansouri2012}, as we can use the explicit expressions we derive using the orbit method, and thus avoid ambiguities in their assumptions.

There are a number of papers related to the subject of this paper, and below we mention just a few. Besides the ones we already referred to earlier such as recent results in \cite{AgrachevBoscainGauthierRossi2009, BaudoinBonnefont2009, SeguinMansouri2012}, we would like to mention earlier explicit hypoelliptic heat kernel expressions in \cite{Gaveau1977a, Hulanicki1976a, BealsGaveauGreiner2000}. Our approach is different as we use Kirillov's orbit method to represent a hypoelliptic heat kernel on a nilpotent group by an integral formula over a Euclidean space. This is what makes it different from using analysis on the group itself as is done in  \cite{HelfferNourrigat1979, DixmierMalliavin1978}.  We leave out a vast literature on various estimates on hypoelliptic heat kernels.

\section{Kirillov's orbit method}\label{s2}

Kirillov's theory gives an explicit description of irreducible unitary representations of a nilpotent Lie group. Throughout this paper we assume that $G$ is connected and simply connected. The main ingredient of Kirillov's theory is the set of the orbits of the coadjoint action. Namely, the coadjoint map allows to describe all irreducible unitary representations of $G$: the set of equivalency classes of all irreducible unitary representations of $G$ are naturally parameterized by the orbits of $\mathfrak{g}^{\ast}$ under the coadjoint action. In this section, we first review  Kirillov's orbit theory \cite{KirillovOrbitMethodAMSBook}, \cite[Section 3.1]{CorwinGreenleafBook}, and then apply it to an $n$-step nilpotent Lie group $G_{n+1}$  which allows us to describe explicitly all irreducible representations of $G_{n+1}$.

Recall that a Lie group $G$ acts on its Lie algebra $\mathfrak{g}$ by the \emph{adjoint action}

\[
\operatorname{Ad}_{g}X:=gXg^{-1}, g \in X \in \mathfrak{g},
\]
and $G$ acts on the dual space $\mathfrak{g}^{\ast}$ by the \emph{coadjoint action} as follows

\[
Ad^{\ast}_{g}l\left( X \right):=l\left( g^{-1}Xg\right), g \in G, l \in \mathfrak{g}^{\ast}, X \in \mathfrak{g}.
\]
Finally, the orbit of $l$ in $\mathfrak{g}^{\ast}$ under the coadjoint action $Ad^{\ast}$ is denoted by  $\mathcal{O}_{l}$.

\begin{definition}
For an element $l\in \mathfrak{g}^*$ we define its  \emph{radical} by
\[
rad_l:=\left\{ Y\in\mathfrak{g}: l[X,Y]=0 \text{ for all }\  X \in \mathfrak{g} \right\}.
\]
\end{definition}
An equivalent description of the radical is given by

\[
rad_l=\left\{ Y\in\mathfrak{g}: \left(\left( \operatorname{ad}_{Y}\right)^{\ast}l\right)\left( X \right)=0 \text{ for all }\  X \in \mathfrak{g} \right\},
\]
and $rad_l$ is a subalgebra of $\mathfrak{g}$.

The next step in Kirillov's method is to find a polarizing subalgebra.

\begin{definition} An ideal $\mathfrak{m}\subset \mathfrak{g}$ is called a \emph{polarizing (or maximal subordinate) subalgebra} for $l$ if it satisfies
\begin{enumerate}
\item $rad_l\subset \mathfrak{m}$;
\item $\operatorname{dim} \mathfrak{m}=\frac{\operatorname{dim} rad_l+\operatorname{dim} \mathfrak{g}}{2}$;
\item $l\left( [\mathfrak{m}, \mathfrak{m}] \right)=0$.
\end{enumerate}
\end{definition}
The last property means that this subalgebra is subordinate for $l$, and the second one indicates that the subalgebra is of maximal dimension. The first property is actually a result proven first by Puk{\'a}nszky in \cite[p. 157]{PukanszkyBook1967}. Recall that for a nilpotent Lie algebra such a subalgebra always exists by \cite[Theorem 1.3.3 ]{CorwinGreenleafBook}.

\begin{remark} As \cite[p.30]{CorwinGreenleafBook} says for each $l \in \mathfrak{g}^{\ast}$ its radical is uniquely determined, but there might be more than one polarizing subalgebra for $l$. When one wants to avoid this ambiguity, one can use Vergne's construction in \cite{Vergne1970a, Vergne1970b} which gives a canonical way to choose a polarizing subalgebra for $l \in \mathfrak{g}^{\ast}$  and a strong Mal'cev basis in $\mathfrak{g}$.
\end{remark}

\subsection{An $n$-step nilpotent Lie group $G_{n+1}$}\label{ss2.1} We would like to illustrate how to find these objects by considering an $n$-step nilpotent Lie group $G_{n+1}$ also known as a thread-like group. Let  $\mathfrak{g}_{n+1}$  be an $(n+1)$-dimensional Lie algebra  generated by $X, Y_1,\dots, Y_n$ satisfying
\[
[X,Y_i]= Y_{i+1}, [X,Y_n]=0,\ \text{and}\ [Y_i,Y_j]=0\ \text{ for all } i,j.
\]
This is an $n$-step nilpotent Lie algebra. A realization of $\mathfrak{g}_{n+1}$  as a matrix algebra is obtained by letting  $W=aX+\sum_{i=1}^nb_iY_i $ corresponding  to the $(n+1)\times (n+1)$ matrix of $ \mathfrak{g}_{n+1}$

\[
\begin{pmatrix}
0 & a & 0 &0& \cdots &0 &  b_n\\
0 & 0 &a &0& \cdots &0 & b_{n-1}\\
0 & 0 &0 & a&\cdots &0 & b_{n-1}\\
\vdots & \vdots & \vdots& \vdots& \ddots&  \vdots&\vdots\\
0 & 0 & 0 &0& \cdots &a&  b_2\\
0 & 0 & 0 &0& \cdots &0 &  b_1\\
0 & 0 & 0 &0& \cdots &0 &  0
\end{pmatrix}.
\]
Let $G_{n+1}$ be a connected, simply connected nilpotent Lie group, with the Lie algebra $ \mathfrak{g}_{n+1}$, then the exponential map $\exp:  \mathfrak{g}_{n+1} \rightarrow G_{n+1}$ is an analytic diffeomorphism. Moreover, if

\[
W=aX+\sum_{i=1}^nb_iY_i \in \mathfrak{g}_{n+1},
\]
then
\[
w=\exp\left(aX+\sum_{i=1}^nb_iY_i\right) \in G_{n+1}
\]
can be written as
\begin{equation}\label{e.3.1}
w=
\left(
  \begin{array}{cccccc}
   1 & a & \frac{a^2}{2!} &\frac{a^3}{3!} & \cdots  & z_n
   \\
   0 & 1 & a &\frac{a^2}{2!} & \cdots & z_{n-1}
   \\
0 & 0 & 1 & a & \cdots & z_{n-2}
\\
\vdots & \vdots & \vdots & \vdots &  \ddots & \vdots
\\
0 & 0 & 0 & 0 & \cdots & z_1
\\
0 & 0 & 0 & 0 & \cdots & 1
  \end{array}
\right),
\end{equation}
where
\begin{equation}\label{e.3.18}
z_j:=\sum_{k=0}^{j-1}\frac{a^k b_{j-k}}{(k+1)!}, j=1, ..., n.
\end{equation}

\subsection{Coadjoint action, coadjoint orbits and polarizing subalgebras for $G_{n}$}

Suppose

\begin{align*}
& W=aX+\sum_{i=1}^nb_iY_i\in \mathfrak{g}_{n+1},
\\
& w=\exp(xX+\sum_{i=1}^ny_iY_i)\in G_{n+1},
\end{align*}
then the adjoint action is given by
\[
Ad_wW=wWw^{-1}=
\left(
  \begin{array}{ccccccc}
    0 & a & 0 &0& \cdots &0 &  \xi_n\\
0 & 0 &a &0& \cdots &0 & \xi_{n-1}\\
0 & 0 &0 & a&\cdots &0 & \xi_{n-1}\\
\vdots & \vdots & \vdots& \vdots& \ddots&  \vdots&\vdots\\
0 & 0 & 0 &0& \cdots &a&  \xi_2\\
0 & 0 & 0 &0& \cdots &0 &  \xi_1\\
0 & 0 & 0 &0& \cdots &0 &  0
  \end{array}
\right),
\]
where
\[
\xi_j=b_j+\sum_{k=1}^{j-1}\frac{x^{k-1}}{k!}\{xb_{j-k}-ay_{j-k}\}.
\]
Finally we can use the basis $\{ X, Y_1,..., Y_n \}$ of $\mathfrak{g}_{n+1}$ and the dual basis $\{X^*,Y_1^*,...,Y_n^*\}$ of $\mathfrak{g}_{n+1}^*$, to see that the co-adjoint action is given by
\begin{eqnarray*}
Ad^*_w(l)(W)=
l(Ad_{w^{-1}}W)&=&a\left(\alpha+\sum_{i=2}^n\beta_i\left(\sum_{k=1}^{i-1}(-1)^{k+1}y_{i-k}\frac{x^{k-1}}{k!}\right)\right)\\
&+&\sum_{j=1}^n\left(b_j\sum_{k=j}^n\beta_k(-1)^{k-j}\frac{x^{k-j}}{(k-j)!}\right),
\end{eqnarray*}
for any $l=\alpha X^*+\sum_{i=1}^n\beta_iY_i^* \in \mathfrak{g}_{n+1}^{\ast}$.

For $m \in \mathbb{N}$ we define the polynomial function

\begin{equation}
f_j(x; \mathbf{B}_{m}):=\sum_{k=j}^{m} (-1)^{k-j}\frac{\beta_k}{\beta_{m}^{k-j}}\frac{(\beta_{m-1}-x)^{k-j}}{(k-j)!},\label{e.2.1}
\end{equation}
where $x \in \mathbb{R}$, $\mathbf{B}_{m}= (\beta_1,\beta_2, ... , \beta_m)
 \in \mathbb{R}^{m}, \beta_{m}\not= 0$.

Observe that the orbit $\mathcal{O}_{l}$ of $l=\alpha X^*+\beta_1Y_1^*+\cdots+\beta_mY_m^*$ with $\beta_m \not= 0$ under the coadjoint action is two-dimensional if $m\geqslant 3$  and is given by
\begin{align*}
&  \mathcal{O}_{l}=\mathcal{O}_{\left( \alpha,\mathbf{B}_{m}, 0, ..., 0 \right)}=
\\
& \left\{yX^*+\sum_{j=1}^{m-2}f_j(x; \mathbf{B}_{m})Y_j^*+xY_{m-1}^*+\beta_{m}Y_{m}^*,  x, y \in \mathbb{R} \right\}.
\end{align*}
Here we identified $l \in \mathfrak{g}^{\ast}$ with $\left( \alpha,\mathbf{B}_{m}, 0, ..., 0 \right)$.
In case of $m=2$ then the orbit of $l=\alpha X^*+\beta_1Y_1^*+\beta_2Y_2^*$ is again two-dimensional and has the following form:
\[
\mathcal{O}_{l}=\mathcal{O}_{\left( \alpha,\beta_1,\beta_2, 0, ..., 0 \right)}=
 \left\{pX^*+qY^*_1+\beta_2Y_{2}^*,  p,q \in \mathbb{R} \right\}.
 \]
and in this case
\[
rad_l=
\{b_2Y_2+\cdots+b_nY_n, b_i\in \mathbb{R}\}
\]
Finally, in the case where $m=1$ then the orbit of $l=\alpha X^*+\beta_1Y_1^*$ is a zero-dimensional orbit (point orbits)
\[  \mathcal{O}_{l}=\mathcal{O}_{\left( \alpha,\beta_1, 0, ..., 0 \right)}= \left\{\alpha X^*+\beta_1Y^*_1 \right\}.
\]
and in this case $rad_l=\mathfrak{g}$.

\begin{prop}\label{p.2.2} For any $l\in \mathfrak{g}_{n+1}^*$ and $m \geqslant 3$
\begin{align*}
& rad_l=
\\
& \left\{\sum_{j=1}^{m-3}b_j\left(Y_j-\frac{f_{j+1}(0;\mathbf{B}_m)}{\beta_m}Y_{m-1}\right) + b_{m-2}Y_{m-2}+b_mY_m+\cdots+b_nY_n
\right\}
\end{align*}
for some $\mathbf{B}_m$, where $f_{j+1}(0;\mathbf{B}_m)$ is defined by \eqref{e.2.1}.
 \end{prop}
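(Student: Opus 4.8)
The plan is to compute $rad_l$ straight from its definition as the kernel of the alternating bilinear form $B_l(U,V):=l([U,V])$ on $\mathfrak{g}_{n+1}\times\mathfrak{g}_{n+1}$; since $B_l$ is antisymmetric its left and right kernels coincide, so $rad_l=\{V: B_l(U,V)=0 \text{ for all } U\}$. First I would take two general elements $U=cX+\sum_{i=1}^n d_iY_i$ and $V=aX+\sum_{i=1}^n b_iY_i$ and expand the bracket using only the structure relations $[X,Y_i]=Y_{i+1}$ (with $[X,Y_n]=0$) and $[Y_i,Y_j]=0$. Because every basis bracket lands in $\operatorname{span}(Y_2,\dots,Y_n)$, this produces the clean expression
\[
[U,V]=\sum_{k=2}^{n}\bigl(c\,b_{k-1}-a\,d_{k-1}\bigr)Y_k .
\]

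Next I would apply $l=\alpha X^*+\sum_i\beta_iY_i^*$, using $\beta_k=0$ for $k>m$, to get $B_l(U,V)=\sum_{k=2}^{m}\beta_k(c\,b_{k-1}-a\,d_{k-1})$. Reading off the coefficients of the free parameters $c,d_1,\dots,d_n$ of the test vector $U$ yields exactly two independent constraints on $V$: the coefficient of each $d_{k-1}$ gives $a\beta_k=0$, and since $\beta_m\neq0$ this forces $a=0$ (so $V$ has no $X$-component), while the coefficient of $c$ gives the single linear relation $\sum_{j=1}^{m-1}\beta_{j+1}b_j=0$. This is the heart of the argument; it already shows $Y_m,\dots,Y_n\in rad_l$ always, and $Y_{m-1}\notin rad_l$ ever.

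I would then parametrize the solution set. Using $\beta_m\neq0$ I solve the relation for the distinguished coordinate, $b_{m-1}=-\beta_m^{-1}\sum_{j=1}^{m-2}\beta_{j+1}b_j$, and treat $b_1,\dots,b_{m-2},b_m,\dots,b_n$ as free. Substituting back writes a general radical element as a combination of $Y_j-\beta_m^{-1}\beta_{j+1}Y_{m-1}$ for $j=1,\dots,m-2$ together with $Y_m,\dots,Y_n$. Counting free parameters gives $\dim rad_l=n-1$, consistent with the two-dimensional coadjoint orbit recorded above through $\dim\mathcal{O}_l=\dim\mathfrak{g}_{n+1}-\dim rad_l$, which is a good sanity check.

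The final step, and the point I expect to be delicate, is matching these coefficients with the closed form in the statement. The coefficient produced by the linear algebra is $\beta_{j+1}/\beta_m$, and to express it through the polynomials $f_{j+1}$ of \eqref{e.2.1} one uses the identity $f_j(\beta_{m-1};\mathbf{B}_m)=\beta_j$, which holds because the factor $(\beta_{m-1}-x)^{k-j}$ annihilates every term except $k=j$ at $x=\beta_{m-1}$. So the substitution must be the \emph{evaluation at $x=\beta_{m-1}$} rather than at $x=0$ (the two already disagree when $m=4$), and the boundary index $j=m-2$ needs separate attention since it carries the same $Y_{m-1}$-correction as the other terms. It is precisely this bookkeeping with the $f_{j+1}$ polynomials, rather than the underlying linear algebra, where an evaluation-point or sign slip is easiest to make, so I would verify the closed form against the explicit $m=3,4$ cases before trusting it.
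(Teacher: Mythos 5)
Your linear algebra is correct and is essentially the same computation as the paper's: $[U,V]=\sum_{k=2}^{n}(cb_{k-1}-ad_{k-1})Y_k$, hence $a=0$ together with the single relation $\sum_{j=1}^{m-1}\beta_{j+1}b_j=0$, solved as $b_{m-1}=-\beta_m^{-1}\sum_{j=1}^{m-2}\beta_{j+1}b_j$, giving $\dim rad_l=n-1$. The genuine gap is in the last step, which you flag but do not close: you try to match the coefficient $\beta_{j+1}/\beta_m$ computed for a \emph{general} $l$ against $f_{j+1}(0;\mathbf{B}_m)/\beta_m$ and conclude that the evaluation point ought to be $\beta_{m-1}$ rather than $0$. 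That is the wrong resolution. The proposition, read together with its qualifier ``for some $\mathbf{B}_m$'', is a statement about the canonical orbit representative
\[
l_0=\sum_{j=1}^{m-2}f_j(0;\mathbf{B}_m)Y_j^*+\beta_mY_m^*,
\]
i.e.\ the point $x=y=0$ of the orbit $\mathcal{O}_{(\alpha,\mathbf{B}_m,0,\dots,0)}$, whose $Y_k^*$-coefficients are \emph{by construction} equal to $f_k(0;\mathbf{B}_m)$ for $k\leqslant m-2$, to $0$ for $k=m-1$, and to $\beta_m$ for $k=m$. Substituting these coefficients into your own general parametrization immediately yields the stated set: the correction to $Y_j$ is $-f_{j+1}(0;\mathbf{B}_m)\beta_m^{-1}Y_{m-1}$ for $j\leqslant m-3$, and the $Y_{m-2}$ term carries no correction precisely because the representative has vanishing $Y_{m-1}^*$-component. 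This reduction to $l_0$ is exactly how the paper's proof begins, and it is the missing idea; checking $m=3,4$ would not have resolved the mismatch without it.

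A side remark that your computation actually makes visible: the paper justifies the reduction by asserting that the radical is independent of the representative in the orbit, but your general formula shows this is not literally true. The hyperplane $\sum_{j=1}^{m-1}\beta_{j+1}b_j=0$ moves as the orbit parameter $\beta_{m-1}$ varies --- in particular $Y_{m-2}\in rad_l$ if and only if $\beta_{m-1}=0$ --- and radicals at different points of an orbit are related by $\operatorname{Ad}_g$, not equal. So the closed form of the proposition holds verbatim only for representatives with vanishing $Y_{m-1}^*$-component; for general $l$ one must keep the actual coefficients $\beta_{j+1}=f_{j+1}(\beta_{m-1};\mathbf{B}_m)$, \emph{including} a correction on the $Y_{m-2}$ term. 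Your unease at this point is well founded, but the correct fix is to specialize to the representative, not to change the evaluation point in the stated formula.
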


\begin{proof}
Note that the definition of the radical is independent of the representative in the orbit, therefore it is enough to compute the radical for elements
\begin{equation}\label{e.2.2}
l=\sum_{j=1}^{m-2}f_j(0;\mathbf{B}_m)Y_j^*+\beta_mY_m^*.
\end{equation}
An arbitrary element of $\mathfrak{g}_{n+1}$ is of the form
\[
Z=aX+\sum_{i=1}^nb_iY_i
\]
and therefore
\begin{align*}
& [X, Z]=b_1Y_2+\cdots +b_{n-1}Y_n,
\\
& [Y_{m-1}, Z]=-aY_m.
\end{align*}
Thus
\[
rad_l=\{Z\in \mathfrak{g}_{n+1}\colon l[X, Z]=l[Y_1, Z]=\cdots =l[Y_n, Z]=0\},
\]
and so we get the relations
\begin{align*}
& \beta_m a=0,
\\
& \beta_3b_2=0 \text{ if } m=3,
\\
& \beta_mb_{m-1}+\sum_{j=2}^{m-2}b_{j-1}f_j(0; \beta_1, ..., \beta_{m})=0 \text{ if } m\geqslant 4.
\end{align*}
Recall that $\beta_m\not= 0$, and therefore it is enough to consider two cases as follows.

\noindent \emph{Case 1:} $m=3$. In this case $a=b_2=0$ and the radical is
\[
rad_l=\operatorname{Span}\left\{ Y_1, Y_3, Y_4, ..., Y_n\right\}.
\]
\emph{Case 2:} $m\geqslant 4$. In this case we get $a=0$ and
\[
b_{m-1}=-\sum_{j=2}^{m-2}\frac{b_{j-1}}{\beta_m}f_j(0; \mathbf{B}_m).
\]

\end{proof}

Now we can describe polarizing algebras for $\mathfrak{g}_{n+1}$ explicitly.

\begin{prop} Suppose $l=\alpha X^*+\beta_1Y_1^*+\cdots+\beta_mY_m^*\in \mathfrak{g}_{n+1}^{\ast}$ with $\beta_m \not= 0$ for some $m\geqslant 3$, then a polarizing algebra for $l$ is unique, and is given by

\[
\mathfrak{m}=\operatorname{Span}\left\{ Y_1, ..., Y_n \right\}.
\]
\end{prop}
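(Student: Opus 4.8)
The plan is to verify the three defining conditions for $\mathfrak{m} = \operatorname{Span}\{Y_1, \dots, Y_n\}$ directly, and then obtain uniqueness from a dimension count combined with the subalgebra requirement. First I would record that $\mathfrak{m}$ is an abelian ideal: since $[Y_i, Y_j] = 0$ and $[X, Y_i] = Y_{i+1} \in \mathfrak{m}$ (with $[X, Y_n] = 0$), the subspace $\mathfrak{m}$ is stable under $\operatorname{ad}_X$ and under every $\operatorname{ad}_{Y_i}$, hence an ideal, and $[\mathfrak{m}, \mathfrak{m}] = 0$. Condition (3), $l([\mathfrak{m}, \mathfrak{m}]) = 0$, is then immediate, and condition (1), $rad_l \subset \mathfrak{m}$, is read off directly from the formula for $rad_l$ in Proposition \ref{p.2.2}, since every listed generator lies in $\operatorname{Span}\{Y_1, \dots, Y_n\}$.

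For condition (2) I would compute $\dim rad_l$ from Proposition \ref{p.2.2}: the free parameters are $b_1, \dots, b_{m-3}$, then $b_{m-2}$, and then $b_m, \dots, b_n$, giving $\dim rad_l = (m-3) + 1 + (n-m+1) = n-1$. Since $\dim \mathfrak{g}_{n+1} = n+1$, the required dimension is $\frac{(n-1)+(n+1)}{2} = n = \dim \mathfrak{m}$, so (2) holds and $\mathfrak{m}$ is a polarizing subalgebra.

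The main work is uniqueness. The key structural observation is that $rad_l$ has codimension $2$ in $\mathfrak{g}_{n+1}$ with complement $\operatorname{Span}\{X, Y_{m-1}\}$: adjoining $Y_{m-1}$ to the generators $Y_j - \frac{f_{j+1}(0;\mathbf{B}_m)}{\beta_m} Y_{m-1}$ recovers $Y_1, \dots, Y_{m-3}$, and together with the remaining generators $Y_{m-2}, Y_m, \dots, Y_n$ this yields all of $Y_1, \dots, Y_n$, so that adjoining $X$ spans $\mathfrak{g}_{n+1}$. Any polarizing subalgebra $\mathfrak{n}$ contains $rad_l$ and has dimension $n$, hence $\mathfrak{n} = rad_l + \mathbb{R}(sX + tY_{m-1})$ for some $(s,t) \neq (0,0)$. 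I would then test the subalgebra condition on the bracket with $Y_{m-2} \in rad_l \subset \mathfrak{n}$: since $[X, Y_{m-2}] = Y_{m-1}$ and $[Y_{m-1}, Y_{m-2}] = 0$, and any element of $rad_l$ commutes with $Y_{m-2}$, one finds $[sX + tY_{m-1}, Y_{m-2}] = sY_{m-1}$. If $s \neq 0$ this element must lie in $\mathfrak{n}$, forcing $Y_{m-1} \in \mathfrak{n}$; but recovering $Y_{m-1}$ from $rad_l$ and $sX + tY_{m-1}$ would require a nonzero multiple of $sX + tY_{m-1}$, hence a nonzero $X$-component, which is impossible. Therefore $s = 0$, and then $\mathfrak{n} = rad_l + \mathbb{R}Y_{m-1} = \operatorname{Span}\{Y_1, \dots, Y_n\} = \mathfrak{m}$.

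The step I expect to be the main obstacle is pinning down the complement of $rad_l$ and confirming $Y_{m-1} \notin rad_l$ cleanly, since the generators of $rad_l$ mix $Y_j$ with $Y_{m-1}$ through the coefficients $f_{j+1}(0;\mathbf{B}_m)/\beta_m$. I would handle this by noting that the $Y_1, \dots, Y_{m-3}$ coordinates already force the coefficients $b_1, \dots, b_{m-3}$ to vanish in any putative expression of $Y_{m-1}$ as an element of $rad_l$, after which the remaining generators $Y_{m-2}, Y_m, \dots, Y_n$ manifestly cannot produce a nonzero $Y_{m-1}$ component. Everything else reduces to routine bracket bookkeeping.
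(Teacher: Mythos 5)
Your proposal is correct and follows essentially the same route as the paper: a dimension count from Proposition \ref{p.2.2} showing $\dim rad_l = n-1$, hence $\dim \mathfrak{m} = n$, so that a polarizing subalgebra is $rad_l$ plus one extra element, which is then forced to have no $X$-component. The only difference is that you carry out in full the uniqueness step (parametrizing the candidate as $rad_l + \mathbb{R}(sX + tY_{m-1})$ and killing $s$ via $[sX+tY_{m-1},Y_{m-2}]=sY_{m-1}$ together with $Y_{m-1}\notin rad_l$), which the paper dismisses as ``easily seen,'' and you also verify conditions (1) and (3) explicitly.
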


\begin{proof}
By Proposition \ref{p.2.2} we see that $\operatorname{dim} rad_l=n-1$ and since $\operatorname{dim} \mathfrak{g}_{n+1}=n+1$, we have $\operatorname{dim} \mathfrak{m}=n$. This means that to find a polarizing subalgebra it is enough to add one element $Y \in \mathfrak{g}_{n+1}$ to $rad_l$ such that $Y$ is linearly independent of $rad_l$ and the ideal generated by $rad_l$ and $Y$ is not the whole algebra
$\mathfrak{g}_{n+1}$. It can be easily seen that such an $\mathfrak{m}$ is unique and it is equal to
\[
\mathfrak{m}=\operatorname{Span}\left\{ Y_1, ..., Y_n \right\}
\]
as long as $m\geqslant 3$ in \eqref{e.2.2}.
\end{proof}
For the case $m=2$ the orbit is again two-dimensional, but the polarizing subalgebra is not unique. For example, the following two ideals are polarizing subalgebras
\begin{align*}
& \mathfrak{m}_1=\operatorname{Span}\left\{ Y_1, Y_2, ..., Y_n \right\},   \text{ or }
\\
& \mathfrak{m}_2=\operatorname{Span}\left\{ X, Y_2, ..., Y_n \right\}.
\end{align*}
Finally, for $m=1$ the zero-dimensional orbits are one--point orbits, namely,
\[
\mathcal{O}_{(\alpha, \beta_1, 0, ..., 0)}=\left\{ l_{\alpha, \beta_1, 0, ..., 0} \right\},
\]
in which case $rad_{l}=\mathfrak{m}=\mathfrak{g}_{n+1}$.

\subsection{Induced representations for nilpotent Lie groups}\label{ss.3.4} To apply Kirillov's orbit method to the representation theory of nilpotent groups we first recall some basic facts about induced representations which can be found in  \cite{CorwinGreenleafBook, FollandHABook} among many other sources.

\begin{definition}\label{d.3.5} The set $\widehat{G}$ of equivalence classes of irreducible unitary representations of a locally compact group $G$  is called the \emph{(unitary) dual space} of $G$.
\end{definition}

If $M$ is a closed connected subgroup of $G$, we can use the induction procedure to describe representations of $G$ via representations of $M$. Suppose $( \chi, \mathcal{H}_\chi )$ is a representation of $M$. Then  $( \chi, \mathcal{H}_\chi )$  yields a natural representation $\pi=\mbox{Ind}(M\uparrow G, \chi)$ of $G$ in a new Hilbert space $\mathcal{H}_\pi$. We start by constructing the representation Hilbert space $\mathcal{H}_\pi$. This space is defined as a Hilbert space of equivalence classes of Borel measurable vector-valued functions $f: G \to \mathcal{H}_\chi$ such that

\begin{align}
& f(m g)=\chi(m)^{-1} f(g), m \in M, g \in G, \notag
\\
& \int\limits_{ G / M}\Vert f(g)\Vert^2 d\mu\left( g \right)< \infty, \label{e.3.2}
\end{align}
where  $\mu\left( g \right)$ is a left-invariant measure on $G / M$.

Note that in our case  both $G$ and $M$ are nilpotent and thus unimodular,  and therefore such a measure always exists on the homogeneous space $G / M$ (e.g. \cite[Lemma 1.2.13]{CorwinGreenleafBook}). In measure--theoretical terms the measure $\mu$ is the pushforward of the Haar measure $dg$ by the quotient map

\begin{align}
& q:G \longrightarrow G / M, \notag
\\
& q\left( g \right):=gM.\label{e.3.6}
\end{align}

For nilpotent groups the induction procedure described above is applied to a particular choice of the representation $\chi$, namely, for a fixed $l \in \mathfrak{g}^{\ast}$ we define the character of $M$ by

\begin{equation}\label{e.3.7}
\chi_{l}\left( \exp\left( X \right) \right):=\exp\left( 2\pi i l\left( X \right)\right), X \in \mathfrak{m},
\end{equation}
where we used the fact that $\exp: \mathfrak{m} \longrightarrow M$ is a (global) diffeomorphism. In this case we choose the Hilbert space $\mathcal{H}_{\pi_{l, \mathfrak{m}}}$ to be  the Hilbert space of equivalence classes of Borel measurable functions $f: G \to \mathbb{C}$  satisfying \eqref{e.3.2}. Note that the map $g\mapsto \Vert f(g)\Vert^2$ is constant on each left coset, so the integral in \eqref{e.3.2} exists. The space $\mathcal{H}_{\pi_{l, \mathfrak{m}}}$ is a completion of the space of such functions with respect to the inner product
\[
\langle f_{1}(g), f_{2}(g)\rangle:=\int_{ G / M}\langle f_{1}(g), f_{2}(g)\rangle d\mu\left( g \right)
\]
which is again well-defined since $\langle f_{1}(g), f_{2}(g)\rangle$ is constant on each left coset.

Finally the induced representation $\pi_{l, \mathfrak{m}}$ is defined by letting $G$ act on the right as follows
\[
\pi_{l, \mathfrak{m}}(x)f(g):=f(x^{-1} g),  \text{ for all }  x\in G, f \in \mathcal{H}_{\pi_{l, \mathfrak{m}}},
\]
which is a unitary operation. This is an irreducible representation of $G$ on $\mathcal{H}_{\pi_{l, \mathfrak{m}}}$, and any irreducible representation of $G$ can be written as an induced representation for some $l \in \mathfrak{g}^{\ast}$ and any polarizing subalgebra $\mathfrak{m}$ for $l$ by \cite[p. 124]{CorwinGreenleafBook}. That is, for any $l \in \mathfrak{g}^{\ast}$ and two polarizing subalgebras these representations are unitarily equivalent.

By  \cite[pp. 124-125]{CorwinGreenleafBook} and \cite[p. 159]{FollandHABook} there is an isometry between $\mathcal{H}_\chi$ and $L^2\left( \mathbb{R}^{k}, dx\right)$ for some $k$ with respect to the Lebesgue measure $dx$. This allows us to find a representation on $L^2\left( \mathbb{R}^{k}, dx\right)$ which is unitarily equivalent to $\pi_{l, \mathfrak{m}}$. Moreover, we can identify smooth vectors $C^{\infty}$ of the representation $\pi_{l, \mathfrak{m}}$ with the Schwartz space on this $\mathbb{R}^{k}$.

To make these isometries more explicit in the nilpotent case we need to choose a weak Mal'cev basis of $\mathfrak{g}$ passing through $\mathfrak{m}$, that is, a basis $\left\{ X_{1}, ..., X_{n}\right\}$ of $\mathfrak{g}$ such that $\left\{ X_{1}, ..., X_{k}\right\}$ is a basis of $\mathfrak{m}$, and $\operatorname{Span}\{X_{1}, ..., X_{j}\}$ is a subalgebra of $\mathfrak{g}$ for any $1\leqslant j \leqslant n$. This basis allows us to have an explicit description of $G / M$ as described by the following theorem which is based on \cite[Theorem 1.2.12 ]{CorwinGreenleafBook}.

\begin{theorem}\label{t.3.5} Let $\mathfrak{m}$ be a $k$-dimensional subalgebra of the nilpotent Lie algebra $\mathfrak{g}$, let $M=\exp(\mathfrak{m})$ and $G=\exp(\mathfrak{g})$, and let  $\{X_1, ..., X_n\}$ be a weak Mal'cev basis for $\mathfrak{g}$ through $\mathfrak{m}$. Define $\phi: \mathbb{R}^{n-k}\rightarrow G / M$ by
\begin{equation}\label{e.3.4}
\phi(x_1, ..., x_{n-k}):=\exp(x_1X_{k+1})\cdots\exp(x_{n-k}X_{n})\cdot M.
\end{equation}
Then $\phi$ is an analytic diffeomorphism which is also a measure space isomorphism from $\left( \mathbb{R}^{n-k}, dx \right)$, where $dx$ is the Lebesgue measure, onto $\left( G / M, \mu \right)$, where $\mu$ is the left-invariant measure on $G / M$.
\end{theorem}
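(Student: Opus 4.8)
The plan is to prove that $\phi$ is first a bijection, then an analytic diffeomorphism, and finally a measure space isomorphism. Since $\{X_1,\dots,X_k\}$ is a weak Mal'cev basis of $\mathfrak{m}$, the lower-dimensional second-kind coordinate theorem identifies $M=\exp(\mathfrak{m})$ with the set of products $\exp(t_1X_1)\cdots\exp(t_kX_k)$. Hence $\phi$ is a bijection precisely when every $g\in G$ has a \emph{unique} factorization
\[
g=\exp(x_1X_{k+1})\cdots\exp(x_{n-k}X_n)\cdot\exp(t_1X_1)\cdots\exp(t_kX_k),
\]
that is, when the analytic product map $\mathbb{R}^{n-k}\times M\to G$ is a bijection. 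Granting this, the differential of that product map at each point is the image of a basis under left and right translations and a unipotent $\operatorname{Ad}$, hence invertible, so it is an analytic diffeomorphism; since the section $x\mapsto\exp(x_1X_{k+1})\cdots\exp(x_{n-k}X_n)$ and the quotient map $q$ of \eqref{e.3.6} are analytic, $\phi$ and $\phi^{-1}$ are then analytic as well.

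To prove the unique factorization I would induct on the codimension $n-k$. The case $n-k=0$ is trivial, and when $n-k=1$ the subalgebra $\mathfrak{m}$ has codimension one in the nilpotent algebra $\mathfrak{g}$, hence is an ideal, so $M$ is normal, $G/M\cong\mathbb{R}$ via $\exp(x_1X_n)$, and the factorization is immediate. For the inductive step I use that $\mathfrak{g}_{n-1}:=\operatorname{Span}\{X_1,\dots,X_{n-1}\}$ is a codimension-one subalgebra of $\mathfrak{g}$, hence an ideal; thus $G_{n-1}:=\exp(\mathfrak{g}_{n-1})$ is a closed normal subgroup with $G/G_{n-1}\cong\mathbb{R}$ realized by $\exp(sX_n)G_{n-1}\leftrightarrow s$. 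Projecting $g$ to $G/G_{n-1}$ reads off the innermost transversal coordinate $x_{n-k}=s$, and then $g\exp(-sX_n)$ lies in $G_{n-1}$, where the remaining factorization into the directions $X_{k+1},\dots,X_{n-1}$ and $\mathfrak{m}$ should follow from the inductive hypothesis applied inside $G_{n-1}$.

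The hard part, and the main obstacle, is that $\mathfrak{m}$ need not be an ideal of $\mathfrak{g}$, so $M$ is generally \emph{not} normal. Consequently the innermost factor $\exp(sX_n)$ cannot be moved past $M$ while remaining in $M$: carrying it to the outside replaces $\mathfrak{m}$ by the conjugate $\operatorname{Ad}(\exp(sX_n))\mathfrak{m}$, so the reduced element $g\exp(-sX_n)\in G_{n-1}$ factors against a rotated copy of $M$ rather than $M$ itself, and the standard second-kind coordinates (with the Mal'cev ordering) naturally produce the opposite handedness of cosets. The resolution exploits nilpotency: $\operatorname{Ad}(\exp(sX_n))=e^{s\operatorname{ad}X_n}$ is unipotent and, by the weak Mal'cev conditions that each $\operatorname{Span}\{X_1,\dots,X_j\}$ be a subalgebra, it is triangular with respect to the flag. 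Therefore the passage between the second-kind coordinates and the factorization coordinates of \eqref{e.3.4} is a polynomial map whose linear part is the identity and whose higher-order terms are triangular; such a map is a global polynomial diffeomorphism of $\mathbb{R}^n$. This triangularity is exactly what forces the factorization to exist and be unique despite the non-normality, and it is the step I expect to require the most care.

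Finally, to upgrade $\phi$ to a measure space isomorphism I would show that the pushforward $\phi_*(dx)$ is a left $G$-invariant measure on $G/M$. By \cite[Lemma 1.2.13]{CorwinGreenleafBook} such an invariant measure exists, because $G$ and $M$ are both unimodular (being nilpotent), and it is unique up to a positive multiple; hence it suffices to verify the invariance of $\phi_*(dx)$. I would check this infinitesimally: read in the coordinates \eqref{e.3.4}, the left action of a one-parameter subgroup $\exp(tZ)$ on $G/M$ is given by polynomial maps that are volume preserving, again a consequence of unimodularity together with the triangular unipotent form of the coordinate change established above. Invariance of $\phi_*(dx)$ then identifies it with $\mu$ after normalization, completing the proof that $\phi$ is a measure space isomorphism.
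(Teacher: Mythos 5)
The paper does not actually prove this statement --- it is quoted from \cite[Theorem 1.2.12]{CorwinGreenleafBook} --- so your argument has to stand on its own, and it has a genuine gap at exactly the step you flag as the hard part. Your architecture is reasonable: unique factorization $g=\gamma(x)m$, induction on $n-k$, base cases via the (correct) fact that a codimension-one subalgebra of a nilpotent Lie algebra is an ideal; and you correctly diagnose that after reading off $x_{n-k}$ from $G/G_{n-1}$ the reduced element $g\exp(-x_{n-k}X_n)$ equals $u\cdot\bigl(\exp(x_{n-k}X_n)\,m\,\exp(-x_{n-k}X_n)\bigr)$ and so factors against the conjugate $\operatorname{Ad}(\exp(x_{n-k}X_n))\mathfrak{m}$ rather than $\mathfrak{m}$, which blocks the inductive hypothesis. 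The problem is your proposed repair: the claim that the weak Mal'cev condition makes $e^{s\,\operatorname{ad}X_n}$ triangular with respect to the flag $\operatorname{Span}\{X_1,\dots,X_j\}$ is false. That condition only says each $\operatorname{Span}\{X_1,\dots,X_j\}$ is a \emph{subalgebra}; it does not make these subspaces $\operatorname{ad}X_n$-invariant (that would be the strong Mal'cev, i.e.\ ideal, condition). Concretely, in the paper's $\mathfrak{g}_4$ with $[X,Y_1]=Y_2$, $[X,Y_2]=Y_3$, the ordered basis $Y_3,Y_1,Y_2,X$ is a weak Mal'cev basis through the non-ideal subalgebra $\mathfrak{m}=\operatorname{Span}\{Y_3,Y_1\}$, yet $\operatorname{ad}X$ sends $Y_1$ to $Y_2\notin\operatorname{Span}\{Y_3,Y_1\}$, so the flag is not preserved by $\operatorname{ad}X_n$ precisely in the case you need. (The theorem still holds in this example --- a direct Campbell--Hausdorff computation recovers $x_1,x_2$ uniquely --- but your argument does not establish it.) The measure-theoretic part of your proposal leans on the same unestablished triangularity, so it inherits the gap.

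The standard proof (Corwin--Greenleaf, Propositions 1.2.7--1.2.9 leading to Theorem 1.2.12) avoids the conjugation problem entirely by ordering the factorization so that the factor peeled off in the induction never has to be commuted past $M$: one first shows that $(m,s)\mapsto m\exp(s_1X_{k+1})\cdots\exp(s_{n-k}X_n)$ is an analytic bijection $M\times\mathbb{R}^{n-k}\to G$ whose expression in exponential or Mal'cev coordinates is a polynomial map, unipotent in the \emph{coordinates} (each new coordinate equals the old one plus a polynomial in the later ones), hence with polynomial inverse and Jacobian one; the unit Jacobian is also what yields the identification of $\phi_*(dx)$ with the invariant measure. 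To repair your write-up, either carry out the clean induction in that ordering and then translate back to the coset convention of \eqref{e.3.4}, or prove directly via Campbell--Hausdorff that the product map $\mathbb{R}^{n-k}\times M\to G$ of \eqref{e.3.9} is a polynomial bijection with polynomial inverse and compute its Jacobian explicitly; the flag-triangularity shortcut as stated cannot be salvaged for weak Mal'cev bases.
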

Denote

\begin{align}
& \gamma: \mathbb{R}^{n-k} \longrightarrow G, \notag
\\
& \gamma\left( x_{1}, ..., x_{n-k} \right):=\exp\left( x_{1}X_{k+1}\right)\cdot ... \cdot\exp\left( x_{n-k}X_{n}\right), \label{e.3.8}
\end{align}
and recall that the quotient map $q:G \longrightarrow G / M$ in \eqref{e.3.6} is a measure space isomorphism. Then the map $\phi$ in Theorem \ref{t.3.5} can be written as

\[
\phi=q \circ \gamma.
\]
Observe that the map

\begin{align}
& \psi: \mathbb{R}^{n-k} \times M \longrightarrow G, \label{e.3.9}
\\
& \left( x, m\right) \longmapsto \gamma\left( x \right) m \notag
\end{align}
is a diffeomorphism from $\mathbb{R}^{n-k} \times M$ onto $G$. We denote the inverse of this map by

\begin{align}
& \psi^{-1}:=\left( \rho_{1}, \rho_{2} \right), \text{ where } \label{e.3.10}
\\
& \rho_{1}: G \longrightarrow \mathbb{R}^{n-k}, \notag
\\
& \rho_{2}: G \longrightarrow M \notag
\end{align}
Using the diffeomorphism $\psi$ and the fact that by Theorem \ref{t.3.5} the map $\phi$ is a measure space diffeomorphism from $\mathbb{R}^{n-k}$ onto $G$ we can induce a unitary isomorphism

\begin{align}
& J: L^{2}( \mathbb{R}^{n-k}, dx ) \longrightarrow \mathcal{H}_{\pi_{l, \mathfrak{m}}}, \label{e.3.11}
\\
& \left( Jf \right)\left( \gamma\left( x \right), m\right):=\chi_{l}\left( m\right)^{-1}f\left( x\right), \text{ for all } x \in \mathbb{R}^{n-k}, m \in M, f \in L^{2}( \mathbb{R}^{n-k}, dx ). \notag
\end{align}
It is easy to see that $Jf \in \mathcal{H}_{\pi_{l, \mathfrak{m}}}$ since it satisfies \eqref{e.3.2}, and the inverse map is given by

\begin{align*}
& J^{-1}: \mathcal{H}_{\pi_{l, \mathfrak{m}}} \longrightarrow L^{2}( \mathbb{R}^{n-k}, dx ),
\\
& \left( J^{-1}h \right)\left( x\right):=h\left( \gamma\left( x \right)\right), \text{ for all } x \in \mathbb{R}^{n-k}, h \in \mathcal{H}_{\pi_{l, \mathfrak{m}}}.
\end{align*}

Finally, we can describe a unitary representation of $G$ on $L^{2}( \mathbb{R}^{n-k}, dx )$ which is unitarily equivalently to $\pi_{l, \mathfrak{m}}$. Namely,

\begin{align}
&
\left( U_{l, \mathfrak{m}}\left( g \right)f \right)\left( x \right):=\chi_{l}\left( \rho_{2}\left( \gamma\left( x \right)g\right)\right)f\left( \rho_{1}\left( \gamma\left( x \right)g\right)\right), \label{e.3.14}
\\
&
\text{ for all } f\in L^{2}( \mathbb{R}^{n-k}, dx ), g \in G, a.e. x \in \mathbb{R}^{n-k}. \notag
\end{align}
The unitary equivalency of $\pi_{l, \mathfrak{m}}$ and $U_{l, \mathfrak{m}}$ can be shown by using the map $J$
 \[
 J^{-1}\pi_{l, \mathfrak{m}} J=U_{l, \mathfrak{m}}.
 \]
Note that this unitary representation $U_{l, \mathfrak{m}}$ depends on the choice of  the weak Mal'cev basis, but any of these choices gives rise to unitarily equivalent representations.

\begin{remark} We will abuse notation and denote the representation $U_{l, \mathfrak{m}}$ by $\pi_{l, \mathfrak{m}}$ whenever it is clear that the representation space is $L^{2}( \mathbb{R}^{n-k}, dx )$.
\end{remark}

\subsection{Irreducible unitary representations of the group  $G_{n+1}$} Now we apply the induction procedure described in Section \ref{ss.3.4} to the group  $G_{n+1}$ introduced in  Section \ref{ss2.1}.

First we identify a point $(\mathbf{b}, a) \in \mathbb{R}^{n+1}$ with a point in the group $G_{n+1}$ by

 \[
 (\mathbf{b}, a):=\exp(b_nY_n+b_{n-1}Y_{n-1}+\cdots+b_1Y_1+aX).
 \]

\begin{theorem}\label{MM} Let $G_{n+1}$ be the n-step nilpotent Lie group described in Section \ref{ss2.1}, then all (nonequivalent) unitary irreducible representations of $G_{n+1}$ are as follows.
\begin{enumerate}
\item The infinite dimensional representations of $G_{n+1}$ on the Hilbert space $\mathcal{H}_\pi= L^2(\mathbb{R}, dx)$ are given by

\begin{align}
& \pi_{l}(\mathbf{b}, a)f(x):=e^{2\pi i l\left(\sum\limits_{k=1}^nB_k \left( x \right)Y_k\right)}f(x+a), \notag
\\
& B_k \left( x \right):=\sum_{i=1}^k\frac{z_i}{(k-i)!}x^{k-i},   k=1,2,\dots, n, \label{e.3.16}
\end{align}
for $f \in L^2(\mathbb{R}, dx), x \in \mathbb{R}, l \in  \mathcal{O}_{(\alpha,\mathbf{B}_m,0,...,0)},  3 \leqslant m \leqslant n$.
\item The one-dimensional unitary representations (characters) on $\mathcal{H}_\pi=\mathbb C$ are given by
\[
\pi_{l}(\mathbf{b}, a)=e^{2 \pi i (\alpha a+\beta_1 b_1)}I, \ \ \ l\in \mathcal{O}_{( \alpha, \beta_1, 0, ..., 0)}
\]
\end{enumerate}
\end{theorem}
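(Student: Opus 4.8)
The plan is to invoke Kirillov's orbit method, exactly as set up in Section~\ref{ss.3.4}, to reduce the classification to one computation per orbit type. By the facts quoted from \cite{CorwinGreenleafBook}, every class in $\widehat{G_{n+1}}$ is represented by an induced representation $\pi_{l,\mathfrak{m}}$ for some $l\in\mathfrak{g}_{n+1}^{\ast}$ and some polarizing subalgebra $\mathfrak{m}$ for $l$; moreover $\pi_{l,\mathfrak{m}}$ and $\pi_{l',\mathfrak{m}'}$ are equivalent precisely when $l,l'$ lie in the same coadjoint orbit, and the choice of $\mathfrak{m}$ is immaterial up to equivalence. So it suffices to fix one representative $l$ and one polarizing subalgebra in each orbit type catalogued before Proposition~\ref{p.2.2}, realize $\pi_{l,\mathfrak{m}}$ through the unitarily equivalent model $U_{l,\mathfrak{m}}$ of \eqref{e.3.14}, and read off the formulas. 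The two families in the statement correspond to the two nontrivial orbit types: the two-dimensional orbits ($m\geqslant 3$) and the point orbits ($m=1$).

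For the infinite-dimensional family I would take $l=\alpha X^{\ast}+\sum_{i=1}^{m}\beta_iY_i^{\ast}$ with $\beta_m\neq 0$, $m\geqslant 3$, and the unique polarizing subalgebra $\mathfrak{m}=\operatorname{Span}\{Y_1,\dots,Y_n\}$ supplied by the proposition following Proposition~\ref{p.2.2}. Since $\dim\mathfrak{m}=n$ and $\dim\mathfrak{g}_{n+1}=n+1$, the quotient $G_{n+1}/M$ is one-dimensional, so $\mathcal{H}_{\pi_{l,\mathfrak{m}}}\cong L^2(\mathbb{R},dx)$, consistent with the claim. I would then take the weak Mal'cev basis $\{Y_1,\dots,Y_n,X\}$ through $\mathfrak{m}$ (each initial span is a subalgebra since the $Y_i$ commute and $\mathfrak{m}$ is an ideal), so that the section $\gamma$ of \eqref{e.3.8} reduces to $\gamma(x)=\exp(xX)$. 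Feeding $g=(\mathbf{b},a)$ into \eqref{e.3.14} then reduces everything to factoring $\gamma(x)g=\exp(xX)\,(\mathbf{b},a)$ as $\gamma(\rho_1)\,\rho_2$ with $\rho_2\in M$.

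This factorization I would carry out in the matrix realization \eqref{e.3.1}. Composing the first superdiagonal (the ``$X$''-parameter) shows $\rho_1(\gamma(x)g)=x+a$, which produces the translation $f(x+a)$. Writing $\rho_2=\exp(\sum_k B_k(x)Y_k)$, the last column of the matrix identity $\exp(xX)(\mathbf{b},a)=\exp((x+a)X)\rho_2$ yields a triangular linear system relating the $B_k(x)$ to the coordinates $z_i$ of $(\mathbf{b},a)$ from \eqref{e.3.18}; solving it gives $B_k(x)=\sum_{i=1}^{k}\frac{z_i}{(k-i)!}x^{k-i}$ as in \eqref{e.3.16}. Then $\chi_l(\rho_2)=\exp(2\pi i\,l(\sum_k B_k(x)Y_k))$ by \eqref{e.3.7}, and \eqref{e.3.14} delivers part~(1) verbatim. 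For the character family, $m=1$ forces $\operatorname{rad}_l=\mathfrak{m}=\mathfrak{g}_{n+1}$, so $M=G_{n+1}$, the quotient is a point, and $\pi_{l,\mathfrak{m}}=\chi_l$ itself; with $l=\alpha X^{\ast}+\beta_1Y_1^{\ast}$ this is $e^{2\pi i(\alpha a+\beta_1 b_1)}$, giving part~(2). Irreducibility of each $\pi_{l,\mathfrak{m}}$ and the exhaustion and separation of $\widehat{G_{n+1}}$ are then quoted from Kirillov's theory as recalled in Section~\ref{ss.3.4}.

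The main obstacle is the explicit matrix bookkeeping of the third paragraph: verifying that the entries extracted from the last column of $\exp(xX)(\mathbf{b},a)$ assemble into exactly the polynomials $B_k(x)$ of \eqref{e.3.16}, with the $z_i$ entering through \eqref{e.3.18}, and confirming that $\rho_1=x+a$ (not $x-a$) so that the translation direction and the homomorphism property both come out right. I would also account for the intermediate type $m=2$, where the polarizing subalgebra is not unique: choosing $\mathfrak{m}_1=\operatorname{Span}\{Y_1,\dots,Y_n\}$ places it under the identical computation, since the section $\gamma$ and hence $B_k(x)$ do not depend on $m$, so it produces no representation beyond those already described by the $L^2(\mathbb{R})$ formula and the orbit list is fully accounted for.
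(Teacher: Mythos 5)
Your proposal follows the same route as the paper's proof: fix the orbit representative, take the polarizing subalgebra $\mathfrak{m}=\operatorname{Span}\{Y_1,\dots,Y_n\}$, realize the induced representation on $L^2(\mathbb{R},dx)$ through the section $\gamma(x)=\exp(xX)$ coming from the weak Mal'cev basis, factor $\gamma(x)g$, and read off \eqref{e.3.16}; the point orbits are handled identically by trivial induction. Your explicit remark on the $m=2$ orbits is in fact slightly more careful than the paper, whose statement of part~(1) restricts to $3\leqslant m\leqslant n$.

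One concrete caution about the matrix bookkeeping you defer. You propose to factor $\gamma(x)g=\gamma(x+a)\,\rho_2$ with $\rho_2\in M$ on the \emph{right}, which is the literal reading of \eqref{e.3.9}--\eqref{e.3.10}; but the computation in the paper's proof, and the one that actually produces the polynomials $B_k$ of \eqref{e.3.16}, extracts the $M$-component from the factorization with $M$ on the \emph{left}, $\gamma(x)g=m\cdot\gamma(x+a)$. Since $M$ is normal both factorizations exist, but the two $M$-components differ by conjugation by $\gamma(x+a)$, and $\chi_l$ is not invariant under that conjugation. Already for $G_3$ (the case $n=2$), writing $g$ in the coordinates $(z_2,z_1,a)$ one finds
\[
\gamma(x)g=\exp\bigl((z_2+xz_1)Y_2+z_1Y_1\bigr)\,\gamma(x+a)=\gamma(x+a)\,\exp\bigl((z_2-az_1)Y_2+z_1Y_1\bigr),
\]
so the left factorization gives $B_2(x)=z_2+xz_1$ as in \eqref{e.3.16}, while the right factorization gives $z_2-az_1$. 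Either choice yields an irreducible unitary representation in the correct equivalence class, so the classification survives, but to obtain the formula as stated you must use the left factorization (equivalently, match the covariance convention $f(mg)=\chi(m)^{-1}f(g)$ of \eqref{e.3.2} rather than the literal definition of $\psi$). With that convention fixed, the rest of your outline matches the paper's argument.
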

\begin{proof}
First we describe the representations of $G_{n+1}$ corresponding to the two-dimensional orbits, that is,

\begin{align*}
 & \mathcal{O}_{(\alpha, \mathbf{B}_m,0,...,0)}=
 \\
 & \left\{xX^*+\sum_{j=1}^{m-2}f_j(y; \mathbf{B}_m)Y_j^*+yY_{m-1}^*+\beta_mY_m^*\colon x, y\in \mathbb{R}, m \geqslant 2 \right\}
\end{align*}
with $\mathbf{B}_m$ defined by \eqref{e.2.1} and
 \[
 l=\sum_{j=1}^{m-2}f_j(0; \mathbf{B}_m)Y_j^* + \beta_mY_m^*
  \]
being orbit representatives for $\beta_m\not= 0$ for the polarizing subalgebra

\[
 \mathfrak{m}= \operatorname{Span} \left\{Y_1, Y_2,..., Y_n \right\}.
\]
Then for $W=\sum_{i=1}^nb_iY_i$ and this choice of $l$ and $\mathfrak{m}$ we see that as in \eqref{e.3.7}
 \[
 \chi_{l}\left(\exp(W)\right)=e^{2\pi i l(W)}
 \]
defines a character of the subgroup $M=\exp(\mathfrak{m})$. Using \eqref{e.3.9} we identify $G$ with
\[
\mathbb{R} \times \left\{ \exp(x X), x \in \mathbb{R} \right\}
\]
via the diffeomorphism $\psi$. Now we can describe the action of $\pi_{l, \mathfrak{m}}$  on $L^2(\mathbb{R}, dx)$. For $(\mathbf{b}, a)=\exp(b_nY_n + b_{n-1}Y_{n-1} +... + b_1Y_1+aX)$ we have

\begin{align*}
& ( 0, \dots, 0, x)\cdot (\mathbf{b}, a)=
\\
& \left(\sum_{i=1}^nz_i\frac{x^{n-i}}{(n-i)!}, \sum_{i=1}^{n-1}z_i\frac{x^{n-1-i}}{(n-1-i)!}, \dots, z_1, 0\right)\cdot (0, \dots, 0, x+a),
\end{align*}
where $z_{i}$ are defined \ref{e.3.18}. In terms of \eqref{e.3.10}  this identity can be written as

\begin{align*}
& \rho_{1}\left( \gamma\left( x \right) g \right)=\rho_{1}\left( ( 0, \dots, 0, x)\cdot (\mathbf{b}, a) \right)= \rho_{1}\left((0, \dots, 0, x+a)\right)
\\
&=x+a,
\\
&  \rho_{2}\left( \gamma\left( x \right) g \right)=\rho_{2}\left( ( 0, \dots, 0, x)\cdot (\mathbf{b}, a) \right)=
\\
& \rho_{2}\left(\left(\sum_{i=1}^nz_i\frac{x^{n-i}}{(n-i)!}, \sum_{i=1}^{n-1}z_i\frac{x^{n-1-i}}{(n-1-i)!}, \dots, z_1, 0\right)\right) =
\\
& \left(B_{n}\left( x \right), B_{n-1}\left( x \right), ..., B_{1}\left( x \right)\right),
\end{align*}
where
 \[
 B_k\left( x \right)=\sum_{i=1}^kz_i\frac{x^{k-i}}{(k-i)!} ,  \ k=1, 2, \dots, n.
 \]
Identifying $\mathcal{H}_{\pi_{l, \mathfrak{m}}}$ with $L^{2}\left( \mathbb{R}, dx \right)$ we see that
\begin{align*}
\pi_{l, \mathfrak{m}}(\mathbf{b}, a)f(x)&= \chi_{l}\left( \rho_{2}\left( \gamma\left( x \right) g \right)\right)f\left( \rho_{1}\left( \gamma\left( x \right) g \right)\right)
\\
&=e^{2\pi i l(\sum_{k=1}^nB_k\left( x \right)Y_k)}f(x+a).
\end{align*}
Now, for the one-point orbit $\mathcal{O}_{(\alpha,\beta_1,0,\dots,0)}=\{\alpha X^*+\beta_1Y^*_1\}$, $\alpha,\beta_1\in \mathbb{R}$, we have seen that $\mathfrak{m}=\mathfrak{g}_{n+1}$ and induction from $M=\exp(\mathfrak{m})$ is trivial. Thus  $\pi_{l}=\chi_{l}$ is one-dimensional and is given by
\[
\pi_{l}(\mathbf{b},a)=e^{2 \pi i (\alpha a+\beta_1 b_1)}I
\]
with $\mathcal{H}_{\pi_{l}}=\mathbb C$.
\end{proof}

\subsection{Generalized Fourier transform}

We start by reviewing the generalized Fourier transform (GFT) as described in \cite[Section 7.5]{FollandHABook}. First we assume that $G$ is a separable locally compact unimodular Lie group of type I, and $L^p(G,\mathbb{C})$ denotes the space of complex-valued functions on $G$ which are square-integrable with respect to the Haar measure $dg$. Later we consider this setting in the case when $G$ is an addition nilpotent.

Recall that we defined $\widehat{G}$ to be the (unitary) dual space of $G$ in Definition \ref{d.3.5}. The structure of the dual $\widehat{G}$ can be described explicitly for some classes of groups such as locally compact Abelian or compact groups. Another class of groups for which one can find an explicit description of $\widehat{G}$ is of simply connected nilpotent Lie groups as we described in Section \ref{ss.3.4}.

\begin{remark}The GFT is usually defined using the structure of $\widehat{G}$ as a measurable space, and therefore $\widehat{G}$ should have nice properties such as being countably separated, since not countably separated measurable spaces are pathological. By \cite[Theorem 7.6]{FollandHABook} this is equivalent to $G$ being of type I. In particular, if $G$ a simply-connected nilpotent Lie group, then by \cite{Dixmier1959a, Kirillov1962a} the group $G$ is of type I.
\end{remark}

Once we have equipped $\widehat{G}$ with the structure of a nice measurable space, we can define a Borel measure $P$ on $\widehat{G}$ called the \emph{Plancherel measure}. Then a possible issue is how to make a measurable selection of $\pi^{\xi}$ for $\xi \in \widehat{G}$. As observed in \cite[p. 230]{FollandHABook} this can be done if $G$ is of type I. Moreover, if $G$ is a simply-connected nilpotent Lie group, the Plancherel measure can be identified with the Lebesgue measure with a density on $\mathbb{R}^{q}$ for some $q$ using the unitary isomorphism $J$ introduced in \eqref{e.3.11}.

For $\xi \in  \widehat{G}$ we will denote by  $\pi^{\xi}$ a choice of an irreducible representation in the equivalency class $\xi$, and the representation Hilbert space by $\mathcal{H}_{\pi^{\xi}}$ or $\mathcal{H}_{\xi}$.

\begin{definition}\label{d.4.8}
For $f\in L^1(G,\mathbb{C})$ \emph{the generalized Fourier transform (GFT) } of $f$ is the map $\mathcal{F}(f)$ (or $\widehat{f}$) that takes each element of $\widehat{G}$ to a linear operator on the representation space $\mathcal{H}_{\pi^{\xi}}$ by
\begin{equation}\label{e.3.17}
\mathcal{F}(f)\left(\xi\right)=\widehat{f}\left(\xi\right)=\widehat{f}\left(\pi_{\xi}\right):=\int_G f(g)\pi^{\xi}(g^{-1})dg, \text{ for } P-a.e.   \xi \in \widehat{G}.
\end{equation}
\end{definition}
As is known, the Fourier transform  $\mathcal{F}(f)$, $f \in L^1(G,\mathbb{C})\cap L^2(G,\mathbb{C})$, is a Hilbert-Schmidt operator for almost all $\xi \in \widehat{G}$ with respect to the Plancherel measure $P$, and the map $\xi\mapsto \widehat{f}\left(\xi\right)$ is a $P$-measurable field of operators which allows for use of direct integrals, and can be used to find a spectral decomposition of differential operators on $L^2(G,\mathbb{C})$. In particular, one can prove the Plancherel Theorem and the Fourier inversion formula in this abstract setting (e.g. \cite[Theorem 7.44]{FollandHABook}). The Plancherel Theorem gives rise to an extension of the GFT (which we again denote by $\mathcal{F}$) to an isometry

\begin{equation}\label{e.3.12}
\mathcal{F}: L^{2}\left(G,\mathbb{C}\right) \longrightarrow \int_{\widehat{G}} \operatorname{HS}\left( \mathcal{H}_{\pi^{\xi}}\right)dP(\xi),
\end{equation}
where $\operatorname{HS}\left( \mathcal{H}_{\pi^{\xi}}\right)$ is the space of Hilbert-Schmidt operators which is a Hilbert space itself. In the case of $G$ being nilpotent, the Plancherel measure and the space of Hilbert-Schmidt operators can be described explicitly. As it is done over Euclidean spaces, we will use the Fourier transform in Definition \ref{d.4.8}  to find a spectral decomposition for the left-invariant vector field $\widetilde{X}$ corresponding to any $X \in \mathfrak{g}$. In this we will use the following notation. Let $T$ be a linear operator on $L^{2}\left( G, \mathbb{C} \right)$, then we denote by $\widehat{T}$ the following linear operator

\begin{align}
& \widehat{T}:   \int_{\widehat{G}} \operatorname{HS}\left( \mathcal{H}_{\pi^{\xi}}\right)dP(\xi) \longrightarrow \int_{\widehat{G}} \operatorname{HS}\left( \mathcal{H}_{\pi^{\xi}}\right)dP(\xi), \notag
\\
& \widehat{T}=\mathcal{F} T \mathcal{F}^{\ast}, \label{e.3.13}
\end{align}
where $\mathcal{F}^{\ast}$ is the adjoint of $\mathcal{F}$
\[
\mathcal{F}^{\ast}: \int_{\widehat{G}} \operatorname{HS}\left( \mathcal{H}_{\pi^{\xi}}\right)dP(\xi) \longrightarrow L^{2}\left(G,\mathbb{C}\right).
\]
\begin{remark} We would like to mention here that some of the standard properties of the GFT are listed in \cite{AgrachevBoscainGauthierRossi2009}, and a few of them should be interpreted with caution. For example, \cite[Equation (18)]{AgrachevBoscainGauthierRossi2009} gives a formal expression for the GFT of the Dirac mass measure, which needs an introduction of an analogue of tempered distributions. We note here that the theory of tempered distributions have been studied on nilpotent groups (not general unimodular groups), see \cite{David-Guillou2014, Corwin1981a, DhiebLudwig1997}.
\end{remark}

\subsection{The GFT and the Plancherel formula for nilpotent groups}

We start by recalling that for connected, simply connected  nilpotent Lie groups the GFT can be described explicitly. In general for an infinite-dimensional irreducible representation the operator $\pi\left( g \right), g \in G$ is not necessarily a trace-class operator. Let $\mathcal{S}(G)$ be the space of Schwartz functions on $G$ as defined in \cite[Appendix A.2]{CorwinGreenleafBook}, then for $f \in \mathcal{S}(G)$ the operator $\pi\left( f \right)$ defined by \eqref{e.3.17} is a trace-class operator.

By \cite[Theorem 4.2.1]{CorwinGreenleafBook} for the irreducible unitary representation $\pi_{l, \mathfrak{m}}$ on $L^{2}\left( \mathbb{R}^{n-k}, dx \right)$ identified with  $U_{l, \mathfrak{m}}$ by \eqref{e.3.14} we have that for any $f \in \mathcal{S}\left( G \right)$ there is an integral kernel $k_{f} \in \mathcal{S}\left( \mathbb{R}^{n-k} \times \mathbb{R}^{n-k}\right)$ such that for $h \in L^{2}\left( \mathbb{R}^{n-k}, dx \right)$

\[
\left( \widehat{f}\left( \pi_{l, \mathfrak{m}}\right)h \right) \left( x \right)=\int_{\mathbb{R}^{n-k}} h\left( y \right) k_{f}\left( x, y \right) dy.
\]
By \cite[Proposition 4.2.2]{CorwinGreenleafBook} the kernel  $k_{f}\left( x, y \right)$ is given by

\begin{equation}\label{e.3.15}
k_{f}\left( x, y \right)=\int_{M}\chi_{l}\left( m\right) f\left( \gamma\left(x\right)^{-1} m \gamma\left(y\right)\right) dm,
\end{equation}
where $\chi_{l}$ is the character on $M$ defined by \eqref{e.3.7}, $\gamma$ is a map $\mathbb{R}^{n-k} \longrightarrow G$ defined by \eqref{e.3.8}, and $dm$ is the Haar measure on $M$. This is what ter Elst and Robinson call a reduced kernel in \cite[p. 481, (4)]{terElstRobinson1995}.

Now we turn to the Plancherel Formula. Our main goal here is to identify the Plancherel measure with a measure on a Euclidean space.  Let $\{X_1, ... , X_n\}$ be a basis for a nilpotent Lie algebra $\mathfrak{g}$, and let $\{X_1^*, ..., X_n^*\}$ be the dual basis for $\mathfrak{g}^*$. We can use \cite[Theorem 3.1.6, Corollary 3.1.8 ]{CorwinGreenleafBook} to find a $G$-invariant set $U$ of generic coadjoint orbits in $\mathfrak{g}$ and two disjoint sets of indices $S, T$ that partition $\{1,\dots, n\}$ in such a way that (so-called) jump indices of any $l \in U$ is exactly $S$. For more details on jump indices \emph{etc} we refer to \cite[p. 84]{CorwinGreenleafBook}. Let
\begin{align*}
& \mathfrak{g}^*_S:=\operatorname{Span}_{\mathbb{R}}\{ X_i^*, i \in S \},
\\
& \mathfrak{g}^*_T:=\operatorname{Span}_{\mathbb{R}}\{X_j^*, j\in T \}.
\end{align*}
Note that the skew-symmetric form $B_l\left( X, Y \right):=l\left( [ X, Y] \right)$ on $\mathfrak{g}$ has the radical $rad_l$, and therefore induces a non-degenerate skew-symmetric form on $\mathfrak{g} / rad_l$.
\begin{definition}
If $l\in \mathfrak{g}^*$ and $\{X_1, \dots, X_{2k}\}$ is a basis for $\mathfrak{g} / rad_l$, then the \emph{Pfaffian} $\operatorname{Pf}\left( l \right)$ is defined by
\begin{align*}
& \operatorname{Pf}\left( l \right)^2:=\operatorname{det} B^{l}, \text{ where }
\\
& B^{l}(X, Y):=l\left( [X, Y] \right), X, Y \in \mathfrak{g},
\\
& B^{l}_{ij}:=B^{l}(X_i,X_j).
\end{align*}
\end{definition}
First we recall \cite[Theorem 4.3.9]{CorwinGreenleafBook} combined with \cite[p.374, (0.3)]{GoodmanR1971a}, where as before $\mathcal{S}(G)$ is the space of Schwartz functions on $G$.
\begin{theorem}[The Fourier Inversion]\label{t.3.9}
Let $\{X_1,\dots,X_n\}$ be a basis for a nilpotent Lie algebra $\mathfrak{g}$, and let $\{X_1^*,\dots,X_n^*\}$ be the dual basis for $\mathfrak{g}^*$. Define $U, S, T$ and Pfaffian as above, then for $f \in \mathcal{S}(G)$, $f(e)$ is given by an absolutely convergent integral
\[
f\left( g \right)=\int_{U\cap \mathfrak{g}^*_T}\operatorname{Tr}\left( \pi_{l, \mathfrak{m}}\left( g^{-1} \right)\widehat{f}\left(\pi_{l, \mathfrak{m}} \right)\right)|\operatorname{Pf}(l)|dl,
\]
where $dl$ is the Lebesgue measure on $\mathfrak{g}^*_T$.
\end{theorem}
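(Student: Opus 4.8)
The plan is to obtain the formula as the specialization to nilpotent $G$ of the abstract Fourier inversion for type I groups (\cite[Theorem 7.44]{FollandHABook}), with the unitary dual $\widehat{G}$, the Plancherel measure $P$, and the cross-section all made explicit through the orbit method. It suffices to treat $g=e$: the value at a general $g$ follows by translation invariance together with unitarity of $\pi_{l,\mathfrak{m}}$, i.e. $\pi_{l,\mathfrak{m}}(g)^{*}=\pi_{l,\mathfrak{m}}(g^{-1})$, which is what places the factor $\pi_{l,\mathfrak{m}}(g^{-1})$ in front of $\widehat{f}$ in the convention used here. Thus I would first establish
\[
f(e)=\int_{\widehat{G}}\operatorname{Tr}\left(\widehat{f}(\pi)\right)dP(\pi),
\]
and then identify the right-hand side with an integral over the Euclidean transversal $U\cap\mathfrak{g}^{*}_{T}$.

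The core step is to replace the abstract integration over $\widehat{G}$ by integration over $U\cap\mathfrak{g}^{*}_{T}$. Using the jump-index machinery of \cite[Theorem 3.1.6, Corollary 3.1.8]{CorwinGreenleafBook} one partitions $\{1,\dots,n\}$ into $S$ and $T$ so that each generic coadjoint orbit in $U$ meets $\mathfrak{g}^{*}_{T}$ in exactly one point; this realizes $U\cap\mathfrak{g}^{*}_{T}$ as a measurable cross-section for the generic orbits, hence (up to a $P$-null set) as a model for $\widehat{G}$, with each point $l$ corresponding to the induced representation $\pi_{l,\mathfrak{m}}$ constructed in Section \ref{ss.3.4}. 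Feeding this cross-section into the inversion formula is precisely the content of \cite[Theorem 4.3.9]{CorwinGreenleafBook}, which produces an integral over $U\cap\mathfrak{g}^{*}_{T}$ against an explicit Plancherel density $\nu(l)$ times Lebesgue measure $dl$ on $\mathfrak{g}^{*}_{T}$.

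The main obstacle is to identify this density $\nu(l)$ with $|\operatorname{Pf}(l)|$, and this is where \cite[p.374, (0.3)]{GoodmanR1971a} enters. The point is to compare two measures on a generic orbit $\mathcal{O}_{l}$: the canonical symplectic (Liouville) volume coming from the Kirillov--Kostant form, and the pushforward of Lebesgue measure $dl$ on the transversal $\mathfrak{g}^{*}_{T}$. The Jacobian relating them is governed by the nondegenerate skew-symmetric form $B^{l}(X,Y)=l([X,Y])$ induced on $\mathfrak{g}/rad_{l}$, whose determinant is $\operatorname{Pf}(l)^{2}$; taking the square root yields the factor $|\operatorname{Pf}(l)|$, up to the normalization of Lebesgue measure and the powers of $2\pi$ already absorbed into the definition \eqref{e.3.7} of $\chi_{l}$. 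The delicate bookkeeping here is to check that the remaining constant is exactly the one making the transform an isometry, and this is what Goodman's computation supplies.

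Finally, the integral is absolutely convergent. By the discussion preceding the theorem, for $f\in\mathcal{S}(G)$ the operator $\widehat{f}(\pi_{l,\mathfrak{m}})$ has a Schwartz integral kernel $k_{f}\in\mathcal{S}(\mathbb{R}^{n-k}\times\mathbb{R}^{n-k})$ given by \eqref{e.3.15}, so that $l\mapsto\operatorname{Tr}\left(\pi_{l,\mathfrak{m}}(g^{-1})\widehat{f}(\pi_{l,\mathfrak{m}})\right)$ is a Schwartz function on the cross-section, while $|\operatorname{Pf}(l)|$ grows only polynomially in $l$. Hence the product is integrable against $dl$, which gives the claimed absolute convergence and completes the reduction to the two cited results.
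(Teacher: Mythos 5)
The paper does not prove this statement at all: it is explicitly ``recalled'' as \cite[Theorem 4.3.9]{CorwinGreenleafBook} combined with \cite[p.374, (0.3)]{GoodmanR1971a}, which are exactly the two references your sketch ultimately reduces to. Your outline (cross-section $U\cap\mathfrak{g}^{*}_{T}$ via jump indices, Pfaffian as the Plancherel density, rapid decay of the trace for Schwartz $f$) is a correct and faithful expansion of how those cited results combine, so it is essentially the same approach.
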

Note that $l \in \mathfrak{g}^*$ can be identified with a point in a Euclidean space by using coordinates of $l$ in the dual basis $\{X_1^*, \dots, X_n^*\}$.  The following theorem can be found in \cite[Theorem 4.3.10]{CorwinGreenleafBook}
\begin{theorem}[Plancherel Theorem]\label{t.3.10} Let notation be as in the previous theorem. For $f \in \mathcal{S}(G)$ we have
\[
\|f\|^2_2=\int_{U\cap \mathfrak{g}^*_T}\|\widehat{f}\left(\pi_{l, \mathfrak{m}} \right)\|^2_{HS}|\operatorname{Pf}(l)|dl,
\]
where $\|A\|_{HS}$ is the Hilbert-Schmidt  norm of an operator $A$.
\end{theorem}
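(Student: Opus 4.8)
The plan is to derive the Plancherel identity from the Fourier Inversion formula of Theorem \ref{t.3.9} by the classical polarization device: one builds an auxiliary function out of $f$ whose value at the identity is $\|f\|_2^2$ and whose Fourier transform is the positive operator $\widehat f(\pi)^{\ast}\widehat f(\pi)$, and then simply reads off the inversion formula at $g=e$.

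First I would introduce the involution $\tilde f(g):=\overline{f(g^{-1})}$ and record two elementary facts. Since $G$ is unimodular, substituting $g\mapsto g^{-1}$ in the defining integral \eqref{e.3.17} and using the unitarity $\pi^{\xi}(g^{-1})^{\ast}=\pi^{\xi}(g)$ gives $\widehat{\tilde f}(\pi)=\widehat f(\pi)^{\ast}$, while the change of variables $g=hk$ in the convolution integral shows that the GFT turns convolution into operator multiplication, $\widehat{f_1\ast f_2}(\pi)=\widehat{f_2}(\pi)\,\widehat{f_1}(\pi)$. A direct computation, again using unimodularity, yields $(f\ast\tilde f)(e)=\int_G f(h)\overline{f(h)}\,dh=\|f\|_2^2$.

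Next I would apply Theorem \ref{t.3.9} to the function $h:=f\ast\tilde f$ at $g=e$. Combining the two identities above gives $\widehat h(\pi_{l,\mathfrak m})=\widehat f(\pi_{l,\mathfrak m})^{\ast}\widehat f(\pi_{l,\mathfrak m})$, and since $\pi_{l,\mathfrak m}(e)=I$ the integrand of the inversion formula becomes $\operatorname{Tr}\bigl(\widehat f(\pi_{l,\mathfrak m})^{\ast}\widehat f(\pi_{l,\mathfrak m})\bigr)=\|\widehat f(\pi_{l,\mathfrak m})\|_{HS}^2$. Equating the left-hand side $h(e)=\|f\|_2^2$ with the right-hand integral over $U\cap\mathfrak g_T^{\ast}$ against $|\operatorname{Pf}(l)|\,dl$ produces exactly the asserted formula.

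The main obstacle is purely technical: Theorem \ref{t.3.9} applies only to Schwartz functions, so I must verify that $h=f\ast\tilde f$ again lies in $\mathcal S(G)$ whenever $f\in\mathcal S(G)$, and that the operators $\widehat h(\pi_{l,\mathfrak m})$ are trace-class so that every trace and the integral converge absolutely. The closure of $\mathcal S(G)$ under convolution and under $f\mapsto\tilde f$ is known for simply connected nilpotent groups (e.g.\ \cite[Appendix A.2]{CorwinGreenleafBook}), and the trace-class property of $\widehat h(\pi)$ for $h\in\mathcal S(G)$ was already recorded just above Theorem \ref{t.3.9}. With these in hand the argument is complete; no density/extension step is needed, since the statement is only claimed for $f\in\mathcal S(G)$.
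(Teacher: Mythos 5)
Your argument is correct. Note, however, that the paper does not prove Theorem \ref{t.3.10} at all: it is quoted verbatim from \cite[Theorem 4.3.10]{CorwinGreenleafBook}, so there is no in-paper proof to compare against. What you supply is the standard polarization derivation of Plancherel from the inversion formula (and it is essentially how the cited reference obtains the result): with the paper's convention $\widehat f(\pi)=\int_G f(g)\pi(g^{-1})\,dg$ one indeed gets $\widehat{f_1\ast f_2}(\pi)=\widehat{f_2}(\pi)\widehat{f_1}(\pi)$ and $\widehat{\tilde f}(\pi)=\widehat f(\pi)^{\ast}$, so that $\widehat{f\ast\tilde f}(\pi)=\widehat f(\pi)^{\ast}\widehat f(\pi)$ is positive and trace-class, $\operatorname{Tr}\bigl(\widehat f(\pi)^{\ast}\widehat f(\pi)\bigr)=\|\widehat f(\pi)\|_{HS}^{2}$, and evaluating Theorem \ref{t.3.9} at $g=e$ (where $(f\ast\tilde f)(e)=\|f\|_2^2$ by unimodularity) gives the claim. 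You correctly flag the only real technical obligations --- that $\mathcal S(G)$ is closed under convolution and the involution, and that $\widehat h(\pi_{l,\mathfrak m})$ is trace-class so the inversion integral converges absolutely --- and both are covered by the facts the paper records just before Theorem \ref{t.3.9} and by \cite[Appendix A.2]{CorwinGreenleafBook}. The one presentational caution: the order reversal in $\widehat{f_1\ast f_2}(\pi)=\widehat{f_2}(\pi)\widehat{f_1}(\pi)$ is harmless here only because the trace is central and the final operator is $\widehat f^{\ast}\widehat f$ rather than $\widehat f\,\widehat f^{\ast}$; it is worth stating explicitly which convolution convention you use, since the paper itself uses $f\ast p_t(h)=\int_G f(hg)p_t(g)\,dg$ in Definition \ref{d.4.6}, which differs from the standard one you implicitly adopt.
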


\begin{remark}\label{r.2.16} Note that usually we identify $U\cap \mathfrak{g}^*_T$ with an open subset of a Euclidean space, and therefore $dl$ is simply the Lebesgue measure.
\end{remark}

\begin{definition}\label{d.3.11} The measure $dP:=|\operatorname{Pf}(l)|dl$ is the pushforward of the Plancherel measure on $\widehat{G}$, namely, $P$ is a Borel measure  defined by the identification $\widehat{G}\cong \mathfrak{g}^{\ast}/ \operatorname{Ad}^{\ast} G$. We will abuse notion and call this pushforward measure the \emph{Plancherel measure}.
\end{definition}
In our setting we can explicitly identify the ingredients needed for application of these theorems.

\begin{prop}\label{p.3.13} For $G_{n+1}$ the space $\mathfrak{g}^*_T$ is isomorphic to $\mathbb{R}^{n-1}$ and the Plancherel measure is given by

\[
dP=|\lambda_{n-1}|d\lambda_{1}d\lambda_{2}\cdots d\lambda_{n-1}.
\]
The Plancherel identity then becomes
\[
\|f\|_2^2=\int_{\mathbb{R}^{n-1}}\|\widehat{f}\left(\pi_{l} \right)\|^2_{HS}|\lambda_{n-1}|d\lambda_{1}d\lambda_{2}\cdots d\lambda_{n-1},
\]
where $\pi_{l}$ are the unitary representations described in Theorem \ref{MM} and $l=\left( \lambda_1, ...,  \lambda_{n-1} \right)$ is identified with a point in $\mathbb{R}^{n-1}$.
\end{prop}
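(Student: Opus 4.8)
The plan is to apply the Fourier inversion and Plancherel theorems (Theorems \ref{t.3.9} and \ref{t.3.10}) directly, so that the entire content of the proposition reduces to computing, for $G_{n+1}$, the jump index sets $S,T$, the cross section $\mathfrak{g}^*_T$, and the Pfaffian $\operatorname{Pf}(l)$. First I would fix the strong Mal'cev basis adapted to the ascending chain of ideals,
\[
Z_1 = Y_n,\ Z_2 = Y_{n-1},\ \dots,\ Z_n = Y_1,\ Z_{n+1} = X,
\]
and check that each $\mathfrak{g}_j = \operatorname{Span}\{Z_1,\dots,Z_j\}$ is an ideal; this is immediate from $[X,Y_i]=Y_{i+1}$, $[X,Y_n]=0$ and $[Y_i,Y_j]=0$, since $\operatorname{ad}X$ only raises the $Y$-index. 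With respect to this basis the skew form $B^l(Z_i,Z_j)=l([Z_i,Z_j])$ has nonzero entries only in the row and column indexed by $Z_{n+1}=X$, namely $B^l(X,Y_i)=l(Y_{i+1})=\beta_{i+1}$ for $1\le i\le n-1$ and $B^l(X,Y_n)=0$.

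Next I would pin down the generic radical and the jump indices. For $l$ with $\beta_n\neq 0$ (the generic $m=n$ case of Proposition \ref{p.2.2}) one has $\operatorname{rad}_l=\{aX+\sum b_iY_i : a=0,\ \sum_{i=1}^{n-1}\beta_{i+1}b_i=0\}$, of dimension $n-1$, so $\mathfrak{g}/\operatorname{rad}_l$ is two-dimensional and there are exactly two jump indices. Applying the rank-jump criterion, that $j\in S$ iff $\dim(\mathfrak{g}_j/(\mathfrak{g}_j\cap\operatorname{rad}_l))>\dim(\mathfrak{g}_{j-1}/(\mathfrak{g}_{j-1}\cap\operatorname{rad}_l))$, the image dimension is $0$ for $j\le 1$, jumps to $1$ at $j=2$ (since the defining coefficient of $Y_{n-1}$ in the radical constraint is $\beta_n\neq0$, so $Y_{n-1}\notin\operatorname{rad}_l$), stays $1$ through $j=n$, and jumps to $2$ at $j=n+1$. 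Hence $S=\{2,n+1\}$ and $T=\{1,3,4,\dots,n\}$, so $|T|=n-1$ and $\mathfrak{g}^*_T=\operatorname{Span}\{Y_n^*,Y_{n-2}^*,\dots,Y_1^*\}\cong\mathbb{R}^{n-1}$; this is precisely the cross section $\{\alpha=0,\ \beta_{n-1}=0\}$ already visible in the orbit representatives of Theorem \ref{MM}.

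Since $|S|=2$, the Pfaffian is the single off-diagonal entry of the $2\times2$ block: $\operatorname{Pf}(l)=l([Z_2,Z_{n+1}])=l([Y_{n-1},X])=-\beta_n$, so $|\operatorname{Pf}(l)|=|\beta_n|$. Introducing coordinates $\lambda_j:=\beta_j$ for $1\le j\le n-2$ and $\lambda_{n-1}:=\beta_n$ on $\mathfrak{g}^*_T$, the Lebesgue measure $dl$ becomes $d\lambda_1\cdots d\lambda_{n-1}$ and $|\operatorname{Pf}(l)|=|\lambda_{n-1}|$, so by Definition \ref{d.3.11} one gets $dP=|\lambda_{n-1}|\,d\lambda_1\cdots d\lambda_{n-1}$. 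Substituting $|\operatorname{Pf}(l)|=|\lambda_{n-1}|$ into Theorem \ref{t.3.10}, and noting that $U\cap\mathfrak{g}^*_T=\{\lambda_{n-1}\neq0\}$ differs from $\mathbb{R}^{n-1}$ only by a null set on which the weight $|\lambda_{n-1}|$ vanishes, yields the stated Plancherel identity over all of $\mathbb{R}^{n-1}$.

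I expect the main obstacle to be the jump-index computation, that is, correctly establishing $S=\{2,n+1\}$ and thereby the complementary cross section $\mathfrak{g}^*_T$, since this requires tracking how the image of each basis vector in $\mathfrak{g}/\operatorname{rad}_l$ behaves along the entire flag. Once $S$ is fixed, the evaluation of the Pfaffian and the substitution into Theorem \ref{t.3.10} are routine bookkeeping.
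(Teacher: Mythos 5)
Your proposal is correct and follows essentially the same route as the paper: both reduce the proposition to the Corwin--Greenleaf Plancherel theorem (Theorem \ref{t.3.10}) and then identify, in the Mal'cev basis $Y_n,\dots,Y_1,X$, the partition $S=\{2,n+1\}$, $T=\{1,3,\dots,n\}$, the cross section $\mathfrak{g}^*_T=\operatorname{Span}\{Y_n^*,Y_{n-2}^*,\dots,Y_1^*\}\cong\mathbb{R}^{n-1}$, and $|\operatorname{Pf}(l)|=|\beta_n|=|\lambda_{n-1}|$. The only difference is one of detail: the paper simply asserts the index partition and writes down the $2\times 2$ Pfaffian matrix, whereas you derive the jump indices from the generic radical via the rank criterion along the flag --- a correct and useful amplification, not a different argument.
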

\begin{proof}
If $\{Y_n^*,\dots,Y_1^*,X^*\}$ is the dual basis for $\mathfrak{g}_{n+1}^*$, the indices $\{1,\dots,n+1\}$ can be partitioned as $S=\{2,n+1\}$ and $T=\{1,3,4,\dots,n\}$ so that
\[
\left(\mathfrak{g}_{n+1}^*\right)_T=\operatorname{Span}_{\mathbb{R}}\{ Y_n^*, Y_{n-2}^*, \dots, Y_1^* \},
\]
and the generic orbits  are of the form
\[
U=\left\{ \sum_{i=1}^n \alpha_iY_i^*+\alpha X^*\colon \alpha_n\not=0 \right\}.
\]
Thus we see that
\[
\left(\mathfrak{g}_{n+1}^*\right)_T\cap U=\left\{ \lambda_{n-1} Y_n^*+\lambda_{n-2}Y_{n-2}^*+\cdots+\lambda_1Y_1^*\colon   \lambda_i\in \mathbb{R}, \lambda_{n-1}\not= 0\right\}.
\]

The Pfaffian is a polynomial on $\mathfrak{g}_{n+1}^*$ such that
\[
\operatorname{Pf}(l)^2=
\operatorname{det}\left(
  \begin{array}{cc}
    0 & l(Y_n)  \\
   l(-Y_n) & 0 \\
  \end{array}
\right)
=l(Y_n)^2.
\]
Note that $\{Y^*_{n-1}, X^*\}$ is a basis for $\mathfrak{g}_{n+1}/rad_{l}$. Identifying $(\mathfrak{g}_{n+1}^*)_T$ with $\mathbb{R}^{n-1}$ and letting $d\lambda_{1}\cdots d\lambda_{n-1}$ be the Lebesgue measure, we have $|\operatorname{Pf}(l)|=|\lambda_{n-1}|$. If $\pi_{l, \mathfrak{m}}$ is the representation corresponding to
\[
l=\lambda_{n-1} Y_n^*+\lambda_{n-2}Y_{n-2}^*+\cdots+\lambda_1Y_1^*,
\]
the Plancherel formula becomes
\[
\|f\|_2^2=\int_{\mathbb{R}^{n-1}}\|\widehat{f}\left(\pi_{l, \mathfrak{m}} \right)\|^2_{HS}|\lambda_{n-1}|d\lambda_{1}d\lambda_{2}\cdots d\lambda_{n-1},
\]
and the Plancherel measure is identified with the following measure on $\mathbb{R}^{n-1}$

\begin{equation}\label{e.3.5}
dP=|\lambda_{n-1}|d\lambda_{1}d\lambda_{2}\cdots d\lambda_{n-1}.
\end{equation}
\end{proof}

\section{Hypoelliptic heat equation on a sub-Riemannian manifold}\label{s3}

In this section, we start by reviewing some standard definitions in sub-Riemannian geometry, and in particular,  how a natural left-invariant sub-Riemannian structure on nilpotent Lie groups is constructed.

\subsection{Sub-Riemannian manifolds} Let $M$ be an $n$-dimensional connected smooth manifold, with tangent and cotangent bundles $TM$ and $T^*M$ respectively.
\begin{definition}
For   $m \leqslant n$, let $\mathcal{H}$ be a smooth sub-bundle of $TM$, where each fiber $\mathcal{H}_q$ has dimension $m$ and is equipped with an inner product $\mathbf{g}$ which smoothly varies between fibers. Then

\begin{enumerate}
\item the triple $\left( M, \mathcal{H}, \mathbf{g} \right)$ is called a \emph{sub-Riemannian manifold of rank} $m$;

\item $\mathcal{H}$ is called a \emph{horizontal distribution} on $M$, and  the inner product $\mathbf{g}$ a \emph{sub-Riemannian metric};

\item sections of $\mathcal{H}$ are called \emph{horizontal vector fields}, and curves on $M$ whose velocity vectors are always horizontal are called \emph{horizontal curves}.
\end{enumerate}
\end{definition}

\begin{as}[H\"{o}rmander's condition]\label{a.3.2}
Throughout this paper we assume that the distribution $\mathcal{H}$ satisfies H\"{o}rmander's (bracket generating) condition; that is, horizontal vector fields with their Lie brackets span the tangent space $T_{p}M$ at every point $p \in M$.
\end{as}
Under H\"{o}rmander's condition any two points on $M$ can be connected by a horizontal curve by the Chow-Rachevski theorem. Thus there is a natural \emph{sub-Riemannian distance} (\emph{Carnot-Carath\'{e}odory distance}) on $M$ defined as the infimum over the lengths of horizontal curves connecting two points. In turn, this affords us the notion of a \emph{horizontal geodesic}, a horizontal curve whose length (locally) realizes the Carnot-Carath\'{e}odory distance.

\begin{definition} Let $\left( M, \mathcal{H}, \mathbf{g} \right)$ be a sub-Riemannian manifold.

\begin{enumerate}

\item Locally $(\mathcal{H}, \mathbf{g})$ can be given by assigning a set of $k$ smooth vector fields spanning $\mathcal{H}$. Suppose these vector fields can be chosen so that they are orthonormal with respect to $\mathbf{g}$
\[
\mathcal{H}_{p}=\operatorname{Span}\{X_1(p), \dots, X_{m}(p) \},  \mathbf{g}_p\left( X_i(p), X_j(p) \right)=\delta_{ij},
\]
then we say that the sub-Riemannian manifold $M$ is \emph{trivializable}.

\item If $M$ is analytic and the vectors $\{X_1, \dots, X_{m}\}$ are analytic vector fields, we say the sub-Riemannian manifold $\left( M, \mathcal{H}, \mathbf{g} \right)$ is \emph{analytic}.

\item $M$ is \emph{regular} if for

\[
\mathcal{H}_1:= \mathcal{H},  \mathcal{H}_{i+1} := \mathcal{H}_i+[\mathcal{H}_i,\mathcal{H}]
 \]
the dimension of $\mathcal{H}_i(p)$, $i=1, 2, \dots$ does not depend on the point $p\in M$.
\end{enumerate}

\end{definition}

\subsection{Left-invariant sub-Riemannian structure on Lie groups}

Let $G$ be a Lie group with Lie algebra $\mathfrak{g}:=T_{e}G$. As usual we identify $T_{e}G$ with the set of  left-invariant vector fields on $G$ as follows. First consider smooth diffeomorphisms $L_{a}$ and $R_{a}$ of $G$, namely, the left and right translations by an element $p \in G$

\begin{align*}
& L_{p}: h \mapsto ph, h \in G,
\\
& R_{p}: h \mapsto hp, h \in G.
\end{align*}

\begin{notation}\label{n.4.5} For any $X \in \mathfrak{g}$  we denote by  $\widetilde{X}$ the unique left-invariant vector field such that $\widetilde{X}\left( e \right)=X$, that is, for any $f \in C^{\infty}\left( G \right)$

\begin{equation}\label{e.4.20}
\widetilde{X}f\left( g \right):=\left.\frac{d}{dt}\right|_{t=0}f\left( ge^{tX}\right)=\left.\frac{d}{dt}\right|_{t=0}f\left( R_{e^{tX}}g\right),
\end{equation}
where $R_{p}: G \to G$ is the right translation by an element $p \in G$
\end{notation}

The Lie group $G$ can  be equipped with a left-invariant Riemannian metric as follows. First an inner product $\langle \cdot, \cdot \rangle_{e}$ on $T_{e}G=\mathfrak{g}$ determines  a left-invariant  metric on $G$ by  the identification $dL_{p} : \mathfrak{g}\cong T_{e}G \rightarrow T_{p}G$, for any $p \in G$. Then we can define the corresponding Riemannian metric $\langle \cdot, \cdot \rangle$ on $G$ by

\begin{equation}\label{e.4.1}
\langle X, Y \rangle_{p}:=\langle \left( dL_{p^{-1}}\right)_{p}\left( X \right), \left( dL_{p^{-1}}\right)_{p}\left( Y \right) \rangle_{e}
\end{equation}
for any $p \in G$ and $X, Y \in \mathfrak{g}$. Another way to write this is as the pull-back $\left( dL_{p}\right)^{\ast}\mathbf{g}=\mathbf{g}$.
Conversely, if a Riemannian metric on $G$  satisfies \eqref{e.4.1}, then it is called left-invariant.

Now we can describe a similar construction in the sub-Riemannian setting.
\begin{definition}
Let $G$ be a Lie group with Lie algebra $\mathfrak{g}$, $\mathcal{H}$ be a distribution satisfying H\"{o}rmander's condition (Assumption \ref{a.3.2}), and $\mathbf{g}$ be a sub-Riemannian metric. We will say that $\left( G, \mathcal{H}, \mathbf{g} \right)$ is equipped with a \emph{left-invariant sub-Riemannian structure} if

\begin{enumerate}
\item
the distribution $\mathcal{H}$ is left-invariant, that is, $\mathcal{H}_{e}$ is a linear subspace of the Lie algebra  $\mathfrak{g}$  such that
\[
\mathcal{H}_{p} = L_{p}\mathcal{H}_{e} \text{ for } p \in G;
\]

\item the metric $\mathbf{g}$ is left-invariant, that is,
\[
\mathbf{g}_{p}\left( X, Y \right)=\mathbf{g}_{e}\left( \left( dL_{p^{-1}}\right)_{p}\left( X \right), \left( dL_{p^{-1}}\right)_{p}\left( Y \right) \right), p \in G, X, Y \in \mathfrak{g}.
\]

\end{enumerate}
\end{definition}

Finally, we observe that all left-invariant sub-Riemannian manifolds are regular and trivializable according to \cite[Remark 5, p. 2627]{AgrachevBoscainGauthierRossi2009}.

\subsection{The hypoelliptic Laplacian}

As we observed in the introduction, there are several choices that need to be made to define a natural hypoelliptic Laplacian corresponding to the sub-Riemannian structure at hand. One of these choices is of a measure, and as is seen from \cite[Remark 16]{AgrachevBoscainGauthierRossi2009} in the case of Lie groups equipped with a left-invariant sub-Riemannian  structure  the Haar measure is a natural choice: the Popp measure defined in \cite{MontgomeryBook2002} and the Hausdorff measure are both left-invariant, and therefore are proportional to the left Haar measure.

Once the measure is chosen, an operator defined as a divergence of the horizontal gradient is the usual sum of squares operator. Note that the argument in \cite{AgrachevBoscainGauthierRossi2009} has a mistake, and for more detailed discussion of related issues we refer to \cite{GordinaLaetsch2014a, GordinaLaetsch2014b}. But no matter which point of view we use, in the case of a nilpotent Lie group all these approaches give the same result: the sum of squares operator.

Let $(G,\mathcal{H}, \mathbf{g})$  be a unimodular Lie group  equipped with a left-invariant sub-Riemannian structure of rank $k$. Our goal now is to see how we can use the GFT to diagonalize left-invariant vector fields, and therefore the sub-Laplacian $\Delta_{H}$. This is really a non-commutative analogue of the Euclidean case. One of the issues we want to clarify in what follows is the domains of the operators involved. This is something that is missing in \cite{AgrachevBoscainGauthierRossi2009}, and can be made explicit in the case when $G$ is nilpotent.

Define $\Delta_H$ to be the left-invariant second order differential operator
\[
\Delta_H f:=\sum_{i=1}^{m}\widetilde{X_{i}}^{2} f, \ f \in C_{c}^{\infty}\left( G \right),
\]
where $\{ X_{i}\}_{i=1}^{m}$ is an orthonormal basis of $\left( \mathcal{H}_{e}, \mathbf{g}_{e} \right)$.

The operator $\Delta_H$ is a densely defined symmetric operator on $L^{2}\left( G, dg \right)$, where $dg$ is a right-invariant Haar measure. It has a self-adjoint extension, namely, the Friedrichs extension, which we will denote by the same $\Delta_H$. For details we refer to \cite[Section II.5]{VaropoulosBook1992}. In addition we assume that  $\mathcal{H}$ satisfies H\"{o}rmander's condition \ref{a.3.2}, therefore the operator $\Delta_H$ is hypoelliptic by \cite{Hormander1967a}.

\begin{definition}\label{d.4.6} Let $P_{t}$ denote the heat semigroup $e^{t\Delta_H}$, where $\Delta_H$ is the self-adjoint (Friedrichs) extension of $\Delta_H|_{C_{c}^{\infty}\left( G \right)}$ to $L^{2}\left( G, dg \right)$, where $dg$ is a right-invariant Haar measure. By the left invariance of $\Delta_H$ and H\"{o}rmander condition \eqref{a.3.2}, $P_{t}$ admits a left convolution kernel $p_{t}$  such that

\[
P_{t}f\left( h\right) = f \ast p_{t}\left( h\right) = \int_{G} f\left( hg\right) p_{t}\left( g\right) dg
\]
for all $ f \in C_{c}^{\infty}\left( G \right)$. The function $p_{t}$ is called the \emph{hypoelliptic heat kernel} of $G$.
\end{definition}

\subsection{Generalized Fourier transform}

We will use the Fourier transform in Definition \ref{d.4.8} and the Fourier inversion formula to find a spectral decomposition for the left-invariant vector field $\widetilde{X}$ corresponding to any $X \in \mathfrak{g}$.

To describe a domain for the differential operator $\widetilde{X}$, as well as for the Fourier transform of  $\widetilde{X}$, we turn to the notion of smooth and analytic vectors for a representation (due to E.~Nelson \cite{Nelson1959a}, see also \cite{GoodmanR1969a}).

\begin{definition}\label{d.4.7}
Let $\pi$ be a unitary representation of $G$ in a complex Hilbert space $\mathcal{H}_{\pi}$, consider the map $f_{v}: G \to \mathcal{H}_{\pi}$ given by

\begin{equation}\label{e.4.3}
f_{v}\left( g \right):=\pi\left( g \right)v \text{ for } v \in \mathcal{H}_{\pi}.
\end{equation}
Then

\begin{enumerate}
  \item $v$ is an \emph{analytic vector} if the map $f_{v}$ is analytic, and the space of such vectors is denoted by $\mathcal{H}_{\pi}^{\omega}$;
  \item $v$ is $C^{k}$ \emph{vector}, if the map $f_{v}$ is $C^{k}$, and the space of such vectors is denoted by $\mathcal{H}_{\pi}^{k}$;
  \item $\mathcal{H}_{\pi}^{\infty}:=\bigcap_{k=0}^{\infty}\mathcal{H}_{\pi}^{k}$, then $v \in \mathcal{H}_{\pi}^{\infty}$ is called a ($C^{\infty}$) \emph{smooth vector}.
\end{enumerate}

\end{definition}

It is well-known that $\mathcal{H}_{\pi}^{\omega}$ and  $\mathcal{H}_{\pi}^{\infty}$ are dense linear subspaces of $\mathcal{H}_{\pi}$, and both are $\pi\left( G\right)$-invariant (\cite[p. 230]{CorwinGreenleafBook}). By \cite[Theorem 3.3]{DixmierMalliavin1978} the space $\mathcal{H}_{\pi}^{\infty}$ coincides with the G{\aa}rding space of finite sums of vectors $\widehat{f}\left( \pi \right)v$, $f \in C_{c}^{\infty}\left( G \right)$, $v \in \mathcal{H}_{\pi}$. We will use the space $\mathcal{H}_{\pi}^{\infty}$ because it works better with the GFT. In particular, we have that for $\pi \in \widehat{G}$

\begin{equation}\label{e.4.4}
\widehat{f}\left( \pi \right)v \in \mathcal{H}_{\pi}^{\infty}, \text{ for all } v\in \mathcal{H}_{\pi}^{\infty}, f \in \mathcal{S}\left( G \right),
\end{equation}
and  if $G$ is a connected, simply connected nilpotent group, then

\begin{equation}\label{e.4.5}
 \mathcal{H}_{J^{-1}\pi J}^{\infty}=\mathcal{S}\left( \mathbb{R}^{n-k} \right),
 \end{equation}
where the first statement can be found in \cite[Theorem A.2.7]{CorwinGreenleafBook}, the second in \cite[Corollary 4.1.2]{CorwinGreenleafBook} with $J$ being the unitary isomorphism introduced in \eqref{e.3.11}.

\begin{definition}\label{d.4.9} Let $X \in \mathfrak{g}$, $\pi \in \widehat{G}$, then define the differential operator $d\pi\left( X \right)$  on $\mathcal{H}_{\pi}$ with the domain

\[
\mathcal{D}\left( d\pi\left( X \right)\right):=\left\{ v \in \mathcal{H}_{\pi}: d\pi(X)\left( v \right):=\widetilde{X}f_{v}=\left.\frac{d}{dt}\right|_{t=0}f_{v}\left( e^{tX} \right) \text{ exists } \right\},
\]
where $f_{v}\left( e^{tX} \right)=\pi\left( e^{tX} \right)v$ as defined by \eqref{e.4.3}.
\end{definition}
Note that since $\pi$ is a unitary representation, by Stone's theorem $d\pi\left( X \right)$ is a closed,  densely defined, essentially skew-adjoint operator on $\mathcal{H}_{\pi}$. Moreover, as noted in \cite[p.226]{CorwinGreenleafBook} for any $X \in \mathfrak{g}$ we have

\begin{align}
& \mathcal{H}_{\pi}^{\infty} \subseteq \mathcal{D}\left( d\pi\left( X \right)\right), \notag
\\
& d\pi(X)\left( \mathcal{H}_{\pi}^{\infty}\right)\subseteq \mathcal{D}\left( d\pi\left( X \right)\right),\label{e.4.6}
\\
& \widetilde{X}f_{v}\left( g \right)=f_{d\pi(X)\left( v \right)}\left(  g \right), v \in \mathcal{H}_{\pi}^{\infty}, g \in G. \notag
\end{align}

\begin{remark}Now we are ready to comment on \cite[Proposition 24]{AgrachevBoscainGauthierRossi2009}. Note that to make sense of all the ingredients in the statement, we need to know the domains of differential operators involved. If we start with $\mathcal{H}_{\pi}^{\omega}$ as the domain of $d\pi(X)$, we see that for a general unimodular group $d\pi\left( X \right)\circ d\pi\left( X \right)$ is again a densely defined second-order differential operator on $\mathcal{H}_{\pi}^{\omega}$. Observe that this domain might depend of the representation $\pi$ which makes interpreting \cite{AgrachevBoscainGauthierRossi2009} difficult.

There are some groups $G$ for which this construction can be made rigorous. The first example is of the Heisenberg group $H_{3}$. We refer to \cite[p. 65]{GoodmanR1969a} for details, but the main point is that infinite-dimensional irreducible unitary representations $\pi_{\lambda} \in \widehat{H_{3}}$ can be indexed by $\lambda \in \mathbb{R}$, and the Plancherel measure can be described as a measure on $\mathbb{R}$. In this case $\mathcal{H}_{\pi_{\lambda}}^{\omega}=\mathcal{H}_{\pi_{1}}^{\omega}$, and so is independent of $\lambda$. We describe this group in Example \ref{ex.4.1}.
This construction can be extended to other nilpotent groups as well, and in particular we can use the fact that $d\pi\left( X \right)$ are differential operators with common domains if $\pi \in \widehat{G}$ are realized as unitary representations on $L^{2}\left( \mathbb{R}^{n-k}, dx \right)$.
\end{remark}
The next theorem can be viewed as a rigorous version of \cite[Theorem 26]{AgrachevBoscainGauthierRossi2009}. In this case $G$ is not assumed to be nilpotent.

\begin{theorem}\label{t.4.10} For any $X \in \mathfrak{g}$, $\pi \in \widehat{G}$, $v \in \mathcal{H}_{\pi}^{\infty}$, $f \in \mathcal{S}\left( G \right)$, then

\[
\widehat{\widetilde{X}^{2}f}\left( \pi \right)v=d^{2}\pi\left( X \right)\widehat{f}\left(\pi \right)v.
\]
Thus $\widehat{\Delta_{H} f}\left( \pi \right)$ is a close, densely defined self-adjoint operator on $\mathcal{H}_{\pi}$ with the domain $\mathcal{H}_{\pi}^{\infty}$ such that for $f \in \mathcal{S}\left( G \right)$

\begin{equation}
\widehat{\Delta_{H} f}\left( \pi \right)=\left( \sum_{i=1}^{m}d^{2}\pi\left( X_{i} \right)\right)\widehat{f}\left(\pi \right).
\end{equation}

\end{theorem}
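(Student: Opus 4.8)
The plan is to prove the first-order intertwining identity $\widehat{\widetilde{X}f}(\pi)v = d\pi(X)\widehat{f}(\pi)v$, and then to bootstrap it to second order and sum over the orthonormal frame. The computation starts from the definition of the GFT in Definition \ref{d.4.8}. Fixing $v \in \mathcal{H}_{\pi}^{\infty}$, I would set $F(t):=\int_G f(ge^{tX})\pi(g^{-1})v\,dg$ and rewrite it by the substitution $h=ge^{tX}$. Since $G$ is unimodular we have $dg=dh$, and $g^{-1}=e^{tX}h^{-1}$ together with $\pi(e^{tX}h^{-1})=\pi(e^{tX})\pi(h^{-1})$ gives $F(t)=\pi(e^{tX})\widehat{f}(\pi)v$. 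By \eqref{e.4.4} the vector $\widehat{f}(\pi)v$ lies in $\mathcal{H}_{\pi}^{\infty}\subseteq \mathcal{D}\left(d\pi(X)\right)$, the inclusion being \eqref{e.4.6}, so the strong derivative $F'(0)$ exists and equals $d\pi(X)\widehat{f}(\pi)v$.

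On the other hand, differentiating under the integral sign gives $F'(0)=\int_G (\widetilde{X}f)(g)\pi(g^{-1})v\,dg=\widehat{\widetilde{X}f}(\pi)v$. This interchange is the one genuinely analytic point: writing the difference quotient as $\tfrac{1}{t}\int_0^t (\widetilde{X}f)(ge^{sX})\,ds$, I would dominate its $\mathcal{H}_{\pi}$-norm by $\sup_{|s|\leqslant 1}\lvert(\widetilde{X}f)(ge^{sX})\rvert\cdot\lVert v\rVert$, which is integrable in $g$ because $\widetilde{X}f\in\mathcal{S}(G)$, right translation by $e^{sX}$ over the compact set $|s|\leqslant 1$ preserves rapid decay, and $\lVert\pi(g^{-1})v\rVert=\lVert v\rVert$. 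Bochner dominated convergence then justifies the exchange, and equating the two expressions for $F'(0)$ yields $\widehat{\widetilde{X}f}(\pi)v=d\pi(X)\widehat{f}(\pi)v$.

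For the second-order statement I would use that $\widetilde{X}f\in\mathcal{S}(G)$ whenever $f\in\mathcal{S}(G)$, apply the first-order identity once with $\widetilde{X}f$ in place of $f$ and once with $f$, and invoke $d\pi(X)\left(\mathcal{H}_{\pi}^{\infty}\right)\subseteq\mathcal{D}\left(d\pi(X)\right)$ from \eqref{e.4.6} to legitimize $d^{2}\pi(X)\widehat{f}(\pi)v=d\pi(X)\left(d\pi(X)\widehat{f}(\pi)v\right)$. Summing over $i=1,\dots,m$ then gives $\widehat{\Delta_{H}f}(\pi)v=\left(\sum_{i=1}^{m}d^{2}\pi(X_i)\right)\widehat{f}(\pi)v$ on $\mathcal{H}_{\pi}^{\infty}$.

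For the self-adjointness claim I would argue that each $d\pi(X_i)$ is essentially skew-adjoint by Stone's theorem, as noted after Definition \ref{d.4.9}, so every $d^{2}\pi(X_i)$ is symmetric and nonpositive on $\mathcal{H}_{\pi}^{\infty}$, and hence so is their sum. Self-adjointness I would extract from the unitary equivalence furnished by the GFT: the Friedrichs extension $\Delta_{H}$ is self-adjoint on $L^{2}(G,dg)$, and \eqref{e.3.13} realizes $\widehat{\Delta_{H}}=\mathcal{F}\Delta_{H}\mathcal{F}^{\ast}$ as a decomposable operator whose fiber over $\pi$ acts as $\sum_{i}d^{2}\pi(X_i)$ on the dense range of the GFT restricted to $\mathcal{S}(G)$; a decomposable self-adjoint operator has self-adjoint fibers for $P$-almost every $\pi$. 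I expect this last step to be the delicate one, precisely because $G$ is not assumed nilpotent, so one cannot fall back on the explicit $L^{2}(\mathbb{R}^{n-k})$ model; the essential self-adjointness on $\mathcal{H}_{\pi}^{\infty}$ must instead be read off from the direct-integral decomposition of the already self-adjoint $\Delta_{H}$, or, alternatively, from Nelson's analytic-vector theorem applied on $\mathcal{H}_{\pi}^{\omega}\subseteq\mathcal{H}_{\pi}^{\infty}$.
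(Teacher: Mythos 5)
Your argument is essentially the paper's own proof: the same change of variables $h=ge^{tX}$ in the GFT integral, unimodularity of $G$, factoring out $\pi\left(e^{tX}\right)$ and differentiating at $t=0$, then iterating the first-order identity and summing over the orthonormal frame; your dominated-convergence justification and the closing discussion of self-adjointness only elaborate steps the paper leaves implicit. One minor remark: your sign $\widehat{\widetilde{X}f}\left(\pi\right)v=+\,d\pi\left(X\right)\widehat{f}\left(\pi\right)v$ is the correct one for the conventions of Definition \ref{d.4.8} and Notation \ref{n.4.5}, whereas the paper's displayed computation ends with $-d\pi\left(X\right)\widehat{f}\left(\pi\right)$ owing to a slip in the substitution ($g^{-1}=e^{tX}h^{-1}$, not $e^{-tX}h^{-1}$); this is immaterial for the second-order statement.
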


\begin{proof} The proof is based on properties of the GFT that we formulated earlier. First, we observe that for operators $\widehat{f}\left( \pi \right)$ and $d\pi\left( X \right)$ by \eqref{e.4.4} and \eqref{e.4.6} for $f \in \mathcal{S}\left( G \right)$ we have

\[
\widehat{f}\left( \pi \right)\left( \mathcal{H}_{\pi}^{\infty}\right)\subseteq \mathcal{H}_{\pi}^{\infty}\subseteq \mathcal{D}\left( d\pi\left( X \right)\right),
\]
Finally, similarly to \cite[p. 124]{CorwinGreenleafBook} for $f \in C_{c}^{\infty}\left( G \right)$

\begin{align*}
& \widehat{\widetilde{X}f}\left( \pi \right)=\left.\frac{d}{dt}\right|_{t=0} \int_{G} f\left( ge^{tX} \right)\pi\left( g^{-1}\right)dg=
\\
& \left.\frac{d}{dt}\right|_{t=0} \int_{G} f\left( g \right)\pi\left( e^{-tX} g^{-1}\right)dg=
\\
& \left.\frac{d}{dt}\right|_{t=0} \int_{G} f\left( g \right)\pi\left( e^{-tX} \right)\pi\left(  g^{-1}\right)dg=-d\pi\left( X \right)\widehat{f}\left( \pi \right),
\end{align*}
where this limit exists on $\mathcal{D}\left( d\pi\left( X \right)\right)$, and so in particular on $\mathcal{H}_{\pi}^{\infty}$ by \eqref{e.4.6}.

The result now follows since we can apply the same argument on the space of smooth vectors $\mathcal{H}_{\pi}^{\infty}$.
\end{proof}
Using \eqref{e.3.13} we can say that
\[
\widehat{\Delta_{H}}: \int_{\widehat{G}} \operatorname{HS}\left( \mathcal{H}_{\pi^{\xi}}\right)dP(\xi) \longrightarrow \int_{\widehat{G}} \operatorname{HS}\left( \mathcal{H}_{\pi^{\xi}}\right)dP(\xi)
 \]
is an (essentially) self-adjoint operator for each $\pi^{\xi}$ which acts by multiplication by the operator $\left( \sum_{i=1}^{m}d^{2}\pi^{\xi}\left( X_{i} \right)\right)$ on the space of $\operatorname{HS}\left( \mathcal{H}_{\pi^{\xi}}\right)$. If in addition $G$ is a connected, simply connected nilpotent group, then for $\pi=\pi_{l, \mathfrak{m}}$ and $f \in C_{c}^{\infty}\left( G \right)$

\[
\widehat{\Delta_{H} f}\left( \pi_{l, \mathfrak{m}} \right)=\left( \sum_{i=1}^{m}d^{2}\pi_{l, \mathfrak{m}}\left( X_{i} \right)\right)\widehat{f}\left(\pi_{l, \mathfrak{m}} \right)
\]
can be described more explicitly. By \eqref{e.4.5} the operator $\sum\limits_{i=1}^{m}d^{2}\pi_{l, \mathfrak{m}}\left( X_{i} \right)$ can be identified with an operator on $L^{2}\left( \mathbb{R}^{n-k}, dx \right)$ which we denote by

\begin{equation}\label{e.4.9}
\widehat{\Delta_{H}}\left( \pi_{l, \mathfrak{m}} \right):=\sum\limits_{i=1}^{m}d^{2}\pi_{l, \mathfrak{m}}\left( X_{i} \right).
\end{equation}

Now we can see that in the nilpotent case there is a natural semigroup corresponding to the GFT of the hypoelliptic Laplacian $\widehat{\Delta_{H}}\left( \pi_{l, \mathfrak{m}} \right)$, which in turn gives an explicit formula for the hypoelliptic heat kernel. Note that \cite[Corollary 29]{AgrachevBoscainGauthierRossi2009} can be interpreted as a version of this formula (modulo the issues we mentioned previously), and in the nilpotent case we also refer to \cite[Equation (6), p. 484]{terElstRobinson1995}. 
\begin{theorem}[Hypoelliptic heat kernel]\label{t.4.11}

\begin{equation}\label{e.3.20}
p_{t}\left( g \right)=\int_{U\cap \mathfrak{g}^*_T} \int_{ \mathbb{R}^{n-k}}\chi_{l}\left( \rho_{2}\left( \gamma\left( x \right)g\right)\right)k_{t}^{l}\left( \rho_{1}\left( \gamma\left( x \right)g\right), x\right)dx \vert\operatorname{Pf}(l)\vert dl,
\end{equation}
where $k_{t}^{l}\left( x, y \right)$ is the heat kernel for a continuous semigroup with the generator $\widehat{\Delta_{H}}\left( \pi_{l, \mathfrak{m}} \right)$. This operator is a second order differential operator with polynomial coefficients on $L^{2}\left( \mathbb{R}^{n-k}, dx \right)$ and with a nonnegative polynomial potential whose degree depends on the structure of the group $G$.
\end{theorem}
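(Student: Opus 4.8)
The plan is to obtain \eqref{e.3.20} by applying the Fourier inversion formula of Theorem \ref{t.3.9} to $f=p_{t}$, after first identifying the fiber operator $\widehat{p_{t}}\left( \pi_{l, \mathfrak{m}}\right)$. The preliminary observation is that for a connected, simply connected nilpotent $G$ the hypoelliptic heat kernel satisfies $p_{t}\in \mathcal{S}(G)$ for every $t>0$; this is what legitimizes applying Theorem \ref{t.3.9} and guarantees that the resulting integral over $U\cap \mathfrak{g}^*_T$ against $\vert\operatorname{Pf}(l)\vert\,dl$ converges absolutely, and that the trace below is finite via the Schwartz reduced kernel \eqref{e.3.15}.

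The central step is to show that the GFT intertwines the heat semigroup $P_{t}=e^{t\Delta_{H}}$ with the fiberwise semigroup generated by the operator \eqref{e.4.9}, that is,
\[
\widehat{p_{t}}\left( \pi_{l, \mathfrak{m}}\right)=e^{t\widehat{\Delta_{H}}\left( \pi_{l, \mathfrak{m}}\right)}.
\]
By Theorem \ref{t.4.10} we have $\widehat{\Delta_{H} f}\left( \pi_{l, \mathfrak{m}}\right)=\widehat{\Delta_{H}}\left( \pi_{l, \mathfrak{m}}\right)\widehat{f}\left( \pi_{l, \mathfrak{m}}\right)$ on the dense space of smooth vectors, which by \eqref{e.4.5} is $\mathcal{S}(\mathbb{R}^{n-k})$. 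Differentiating $t\mapsto \widehat{p_{t}}\left( \pi_{l, \mathfrak{m}}\right)$ and using $\widehat{p_{0}}=I$ (the GFT of the Dirac mass at $e$) identifies $\widehat{p_{t}}$ with the claimed exponential by uniqueness for the operator ODE on each fiber. Since $\widehat{\Delta_{H}}\left( \pi_{l, \mathfrak{m}}\right)$ is essentially self-adjoint and bounded above, it generates a contraction semigroup possessing a smooth integral kernel $k_{t}^{l}(x,y)$, so that $\widehat{p_{t}}\left( \pi_{l, \mathfrak{m}}\right)h(x)=\int_{\mathbb{R}^{n-k}}k_{t}^{l}(x,y)h(y)\,dy$.

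Next I would substitute this into Theorem \ref{t.3.9} and evaluate $\operatorname{Tr}\left( \pi_{l, \mathfrak{m}}\left( g^{-1}\right)\widehat{p_{t}}\left( \pi_{l, \mathfrak{m}}\right)\right)$ using the explicit realization \eqref{e.3.14} of $\pi_{l, \mathfrak{m}}$ on $L^{2}(\mathbb{R}^{n-k},dx)$. The composition has integral kernel $\chi_{l}\left( \rho_{2}\left( \gamma(x)g\right)\right)k_{t}^{l}\left( \rho_{1}\left( \gamma(x)g\right),y\right)$, and integrating over the diagonal $y=x$ produces exactly the inner integral of \eqref{e.3.20}; reassembling against $\vert\operatorname{Pf}(l)\vert\,dl$ yields the formula. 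To establish the stated structure of the generator, I would differentiate \eqref{e.3.14} (for $G_{n+1}$ this is \eqref{e.3.16}): each $d\pi_{l, \mathfrak{m}}(X_i)$ is first order, either $c_i\partial_{x_j}$ or multiplication by $2\pi i$ times a real polynomial $p_i(x)$ coming from the character $\chi_{l}$. Summing the squares in \eqref{e.4.9} gives a second order operator with polynomial coefficients whose zeroth order term is $-4\pi^{2}\sum_i p_i(x)^{2}$, a sum of squares and hence a nonnegative polynomial potential, of degree $2\max_i\deg p_i$ governed by the nilpotency data through the orbit.

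The main obstacle will be the central step, namely making $\widehat{p_{t}}=e^{t\widehat{\Delta_{H}}}$ rigorous fiberwise. This is precisely the domain issue the paper emphasizes: one must verify the operator ODE on the common dense core $\mathcal{S}(\mathbb{R}^{n-k})$, confirm that the abstract fiber exponential is the genuine semigroup carrying the kernel $k_{t}^{l}$ rather than a merely formal object, and control the interchange of the semigroup with the direct integral. All of this rests on $p_{t}\in\mathcal{S}(G)$ and the Schwartz-kernel statements of Section \ref{ss.3.4}, so the analytic heart of the argument is ensuring these apply uniformly in $l$ so that the $dl$-integral in \eqref{e.3.20} converges absolutely.
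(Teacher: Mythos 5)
Your proposal is correct in outline and shares the paper's skeleton --- take the GFT of $p_{t}$, apply Fourier inversion, and evaluate the trace as the integral of the operator kernel over the diagonal --- but it diverges from the paper at two points worth comparing. First, you apply Theorem \ref{t.3.9} directly to $p_{t}\in\mathcal{S}(G)$; the paper instead regularizes with a bounded approximate identity $\varphi_{n}$, applies the inversion formula to $P_{t}\varphi_{n}=p_{t}\ast\varphi_{n}$, and passes to the limit, precisely in order to connect the convolution-kernel definition of $p_{t}$ (Definition \ref{d.4.6}) with the pointwise value $p_{t}(g)$ and to justify taking traces of operators that are a priori only Hilbert--Schmidt. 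Your shortcut is legitimate once $p_{t}\in\mathcal{S}(G)$ is granted (then $\widehat{p_{t}}\left(\pi_{l,\mathfrak{m}}\right)$ is trace class by \cite[Theorem 4.2.1]{CorwinGreenleafBook}) and is arguably cleaner. Second --- the more substantive difference --- you propose to prove the fiberwise intertwining $\widehat{p_{t}}\left(\pi_{l,\mathfrak{m}}\right)=e^{t\widehat{\Delta_{H}}\left(\pi_{l,\mathfrak{m}}\right)}$ from scratch by an operator ODE with initial condition $\widehat{p_{0}}=I$. The paper does not do this: it produces $k_{t}^{l}$ directly from the reduced-kernel formula \eqref{e.4.8} and cites ter Elst--Robinson \cite{terElstRobinson1995} for the identification of $k_{t}^{l}$ with the heat kernel of $\widehat{\Delta_{H}}\left(\pi_{l,\mathfrak{m}}\right)$. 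Your route is more self-contained, but the initial condition is the delicate point you rightly flag: $p_{0}=\delta_{e}\notin\mathcal{S}(G)$, so $\widehat{p_{0}}=I$ is only formal and must be replaced by strong convergence $\widehat{p_{t}}\left(\pi_{l,\mathfrak{m}}\right)v\to v$ on a dense core as $t\to 0^{+}$, together with a justification for interchanging $\partial_{t}$ with the integral defining the GFT; carried out in full, this amounts to reproving the cited result. Two minor cautions: keep the $g$ versus $g^{-1}$ bookkeeping consistent (Theorem \ref{t.3.9} carries $\pi_{l,\mathfrak{m}}\left(g^{-1}\right)$ because the GFT \eqref{e.3.17} integrates against $\pi(g^{-1})$, yet your displayed kernel already involves $\gamma(x)g$, which is what \eqref{e.3.20} requires); and your description of each $d\pi_{l,\mathfrak{m}}(X_{i})$ as \emph{either} a constant-coefficient derivative \emph{or} a polynomial multiplication is accurate for $H_{3}$ and $G_{n+1}$ but not for a general nilpotent $G$, where these are first-order operators with polynomial coefficients (\cite[Theorem 4.1.1]{CorwinGreenleafBook}); the nonnegativity of the resulting potential then needs a word more than ``sum of squares of the $p_{i}$''.
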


\begin{remark} By Remark \ref{r.2.16} we usually identify $U\cap \mathfrak{g}^*_T$ with an open subset of a Euclidean space and $dl$ with the Lebesgue measure. Thus \eqref{e.3.20} gives a Euclidean integral formula for the hypoelliptic heat kernel on $G$. In addition, we can view $U\cap \mathfrak{g}^*_T$ as a non-commutative spectrum of a nilpotent group $G$. This goes back to an observation in \cite{Kirillov1962a} that no discrete spectra arises in this case, and so it is not surprising that we have an integral formula instead of a series as for a compact Lie group (e.~g. \cite{BaudoinBonnefont2009}).
\end{remark}

\begin{proof} Recall that the heat kernel $p_{t}$ is the convolution kernel defined in Definition \ref{d.4.6}
\[
P_{t}f\left( g\right) = f \ast p_{t}\left( g\right) = \int_{G} f\left( gh\right) p_{t}\left( h\right) dh.
\]
The heat kernel $p_{t} \in \mathcal{S}\left( G \right)$, and therefore by \eqref{e.3.15} for the representation  $\pi_{l, \mathfrak{m}}$ on $L^{2}\left( \mathbb{R}^{n-k}, dx \right)$ there is an integral kernel $k_{t} \in \mathcal{S}\left( \mathbb{R}^{n-k} \times \mathbb{R}^{n-k}\right)$ such that for $h \in L^{2}\left( \mathbb{R}^{n-k}, dx \right)$

\begin{equation}\label{e.4.7}
\left( \widehat{p_{t}}\left( \pi_{l, \mathfrak{m}}\right)h \right) \left( x \right)=\int_{\mathbb{R}^{n-k}} h\left( y \right) k_{t}^{l}\left( x, y \right) dy,
\end{equation}
where

\begin{equation}\label{e.4.8}
k_{t}^{l}\left( x, y \right)=\int_{M}\chi_{l}\left( m\right) p_{t}\left( \gamma\left(x\right)^{-1} m \gamma\left(y\right)\right) dm, \hskip0.07in x, y \in \mathbb{R}^{n-k}.
\end{equation}
Here we use Vergne's polarization subalgebra $\mathfrak{m}$ to define $M$.  First observe that for any $f \in \mathcal{S}\left( G \right)$

\[
\widehat{P_{t}f}\left( \pi_{l, \mathfrak{m}}\right)=\widehat{f}\left( \pi_{l, \mathfrak{m}}\right)\circ \widehat{p_{t}}\left( \pi_{l, \mathfrak{m}}\right)
\]
is a trace-class integral operator on $L^{2}\left( \mathbb{R}^{n-k}, dx \right)$. To recover the heat kernel, we need to find a function $f$ such that $\widehat{f}\left( \pi_{l, \mathfrak{m}}\right)$ is close to the identity operator on $\mathcal{H}_{l, \mathfrak{m}}$. This presents two problems: the first is that the identity operator is not a Hilbert-Schmidt operator, and the second is that in general $\widehat{p_{t}}\left( \pi_{l, \mathfrak{m}}\right)$ is only Hilbert-Schmidt, and therefore taking the trace in the Fourier inversion formula might be problematic. We will deal with the first issue by taking an approximate identity in $G$ which is equivalent to defining the Dirac $\delta$ function as a tempered distribution, and the second issue has been addressed in \cite{CorwinGreenleafBook} as we explain below.

Namely, let $\{ \varphi_{n}\}_{n=1}^{\infty}$ be a bounded approximate identity, that is, $\varphi_{n} \in C_{c}\left( G \right)$  be a sequence of functions such as in \cite[Proposition 2.42]{FollandHABook}. In particular, for any $h \in L^{2}\left( G \right)$

\[
h \ast \varphi_{n} \xrightarrow[n \to \infty]{L^{2}\left( G \right)} h
\]
and the operator norms of $h \longmapsto h \ast \varphi_{n}$ are (uniformly) bounded.

Now we can use the fact that by \cite[Theorem 4.2.1]{CorwinGreenleafBook} for any function in $\mathcal{S}\left( G \right)$ its Fourier transform is not just Hilbert-Schmidt, but trace-class. Therefore for the heat kernel $p_{t}$ we see that $ \widehat{p_{t}}\left( \pi_{l, \mathfrak{m}}\right)$ is a trace-class operator, and because of the assumptions on the approximate identity $\{ \varphi_{n}\}_{n=1}^{\infty}$  we see that the convergence under the trace

\[
\operatorname{Tr}\left( \widehat{\varphi_{n}}\left( \pi_{l, \mathfrak{m}}\right)\circ \widehat{p_{t}}\left( \pi_{l, \mathfrak{m}}\right)\right)
\xrightarrow[n \to \infty]{}
\operatorname{Tr}\left( \widehat{p_{t}}\left( \pi_{l, \mathfrak{m}}\right)\right)
\]
holds. Now we can use the Fourier Inversion formula \cite[Theorem 7.44, p. 234]{FollandHABook} for nice enough functions $h$ (such as $p_{t} \ast \varphi_{n}$) on $G$

\[
h\left( g\right)= \int_{\widehat{G}}\operatorname{Tr}(\widehat{h}(\pi^{\xi})\circ \pi^{\xi}(g))dP(\xi).
\]
In particular, if $h$ is in addition continuous, this is a pointwise identity.  Applying this formula to $P_{t}\varphi_{n}$, we have
\begin{align*}
& \left( P_{t}\varphi_{n}\right)\left( g\right)= \left( p_{t} \ast \varphi_{n}\right)\left( g\right)=
\\
& \int_{\widehat{G}}\operatorname{Tr}(\widehat{p_{t} \ast \varphi_{n}}(\pi^{\xi})\circ \pi^{\xi}(g))dP(\xi)=
\\
& \int_{\widehat{G}}\operatorname{Tr}\left( \widehat{\varphi_{n}}\left( \pi^{\xi}\right)\circ \widehat{p_{t}}\left( \pi^{\xi}\right)\circ \pi^{\xi}(g) \right)dP(\xi).
\end{align*}
In our setting the trace is taken in the representation space $L^{2}\left( \mathbb{R}^{m}, dx \right)$, and by Theorem \ref{t.3.10} we can identify $\widehat{G}$ with $U\cap \mathfrak{g}^*_T$ equipped with the Plancherel measure $\vert\operatorname{Pf}(l)\vert dl$, so we have the Fourier Inversion formula for $P_{t}\varphi_{n}$ gives

\begin{align*}
& \left( P_{t}\varphi_{n}\right)\left( g\right)=\int_{U\cap \mathfrak{g}^*_T}\operatorname{Tr}(\widehat{P_{t}\varphi_{n}}(\pi_{l, \mathfrak{m}})\circ \pi_{l, \mathfrak{m}}(g))\vert\operatorname{Pf}(l)\vert dl=
\\
& \int_{U\cap \mathfrak{g}^*_T}\operatorname{Tr}(\widehat{\varphi_{n}}\left( \pi_{l, \mathfrak{m}}\right)\circ \widehat{p_{t}}\left( \pi_{l, \mathfrak{m}}\right)\circ \pi_{l, \mathfrak{m}}(g))\vert\operatorname{Pf}(l)\vert dl \xrightarrow[n \to \infty]{}
\\
& \int_{U\cap \mathfrak{g}^*_T}\operatorname{Tr}( \widehat{p_{t}}\left( \pi_{l, \mathfrak{m}}\right)\circ \pi_{l, \mathfrak{m}}(g))\vert\operatorname{Pf}(l)\vert dl=
\\
& \int_{U\cap \mathfrak{g}^*_T}\operatorname{Tr}( \pi_{l, \mathfrak{m}}(g)\circ\widehat{p_{t}}\left( \pi_{l, \mathfrak{m}}\right))\vert\operatorname{Pf}(l)\vert dl,
\end{align*}
where we used the fact that the convergence under $\operatorname{Tr}$ and therefore under the integral does not change after composing the unitary operator $\pi_{l, \mathfrak{m}}(g)$, and the last line uses the centrality of the operator trace. Now observe that if we have a unitary operator $U$ on $L^{2}\left( \mathbb{R}^{n-k} \right)$ composed with a trace class integral operator

\[
\left( Kf\right)\left( x \right):= \int_{\mathbb{R}^{n-k}}f \left( y \right) k\left( x, y \right) dy,
\]
then $U\circ K$ is a trace class integral operator with the kernel given by

\[
U k\left( x, \cdot \right),
\]
where $U$ is applied to the kernel $k\left( x, y \right)$ in the variable $x$. Then

\[
\operatorname{Tr} \left( U\circ K \right)=\int_{\mathbb{R}^{n-k}}U k\left( x, x \right) dx.
\]
Now we can refer to \eqref{e.4.7} and \eqref{e.4.8} to see that

\[
\left( \widehat{p_{t}}\left( \pi_{l, \mathfrak{m}}\right)h \right) \left( x \right)=\int_{\mathbb{R}^{n-k}} h\left( y \right) k_{t}^{l}\left( x, y \right) dy,
\]
where $k_{t}$ is defined by \eqref{e.4.8}.

Recall that we identify the unitary representation $\pi_{l, \mathfrak{m}}$ with the representation $U_{l, \mathfrak{m}}$ defined by \eqref{e.3.14}, and so applying it to the kernel $k_{t}^{l}$ in $x$ we have

\[
\left( \pi_{l, \mathfrak{m}}\left( g \right)k_{t}^{l} \right)\left( x, y \right)=\chi_{l}\left( \rho_{2}\left( \gamma\left( x \right)g\right)\right)k_{t}^{l}\left( \rho_{1}\left( \gamma\left( x \right)g\right), y\right),
\]
and therefore
\begin{align*}
&  \left( P_{t}\varphi_{n}\right)\left( g\right)\xrightarrow[n \to \infty]{}
\\
& \int_{U\cap \mathfrak{g}^*_T} \int_{ \mathbb{R}^{n-k}}\chi_{l}\left( \rho_{2}\left( \gamma\left( x \right)g\right)\right)k_{t}^{l}\left( \rho_{1}\left( \gamma\left( x \right)g\right), x\right)dx \vert\operatorname{Pf}(l)\vert dl.
\end{align*}
Now we recall that $p_{t}$ is a convolution kernel, and thus at least

\[
\left( P_{t}\varphi_{n}\right)\left( g\right)\xrightarrow[n \to \infty]{L^{2}\left( G \right)} p_{t}\left( g \right).
\]
Actually, this convergence is pointwise since the hypoelliptic heat kernel $p_{t}$ is continuous and bounded.
Thus
\[
p_{t}\left( g \right)=\int_{U\cap \mathfrak{g}^*_T} \int_{ \mathbb{R}^{n-k}}\chi_{l}\left( \rho_{2}\left( \gamma\left( x \right)g\right)\right)k_{t}^{l}\left( \rho_{1}\left( \gamma\left( x \right)g\right), x\right)dx \vert\operatorname{Pf}(l)\vert dl.
\]
Finally we observe that $k_{t}^{l}$ is the heat kernel for $\widehat{\Delta_{H}}\left( \pi_{l, \mathfrak{m}} \right)$ on $L^{2}\left( \mathbb{R}^{n-k}, dx \right)$ by \cite[p. 481]{terElstRobinson1995}. Note that this heat kernel corresponds to the initial point $\gamma^{-1}\left( x \right)\gamma\left( y \right)$ as can be seen from \eqref{e.4.8}. Recall that we can use Mal'cev basis which allows us to write left-invariant vector fields as first order differential operators with polynomial coefficients. By \cite[Theorem 4.1.1]{CorwinGreenleafBook} in these coordinates $d\pi_{l, \mathfrak{m}}\left( X_i\right)$ is a differential operator of degree $0$ or $1$ with polynomial coefficients with respect to $\left( x_1, x_2,..., x_n \right)$, and therefore the operator  $\widehat{\Delta_{H}}\left( \pi_{l, \mathfrak{m}} \right)$ is a second order differential operator with polynomial coefficients on $L^{2}\left( \mathbb{R}^{n-k}, dx \right)$.
\end{proof}

\begin{remark}\label{r.4.12} The kernel $k_{t}^{l}\left( x, y \right)$ is called  the reduced heat kernel in \cite[p. 481]{terElstRobinson1995} among other papers, and it is the kernel of the following semigroup

\[
\left( e^{t\widehat{\Delta_{H}}\left( \pi_{l, \mathfrak{m}} \right)}f\right)\left( x \right)=\int_{ \mathbb{R}^{n-k}}k_{t}^{l}\left( x, y \right) f\left( y \right) dy, f \in L^{2}\left( \mathbb{R}^{n-k}, dy \right).
\]
\end{remark}

\begin{remark} Note that we can use some crude estimates for the heat kernel for the Schr\"{o}dinger operator such as in \cite{DaviesHeat_Kernels_and_Spectral_Theory} to prove heat kernel estimates for the hypoelliptic heat kernel $p_{t}$.  We use the following statement in \cite[Proposition 2.2.8]{SpinaChiaraPhDThesis}. For example, the reduced heat kernel $k_{t}^{l}$ in the case when $G$ is the Heisenberg group or the group $G_{n}$ introduced in Section \ref{ss2.1} is the heat kernel for  a Schr\"{o}dinger operator $\mathcal{L}=-\Delta+V_{l}$ on $L^{2}\left( \mathbb{R}^{N}, dx \right)$  with a nonnegative polynomial potential $V_{l}$. In this case $V_{l}$ grows faster than $\vert x \vert^{\alpha}$ for some $\alpha$, then there are positive constants $c_{l}, C_{l}$ (depending on $\alpha$ \emph{etc}) such that the heat kernel for $\mathcal{L}$ satisfies

\[
k_{t}\left( x, y \right) \leqslant \frac{C_{l}}{t^{N/2}}\exp\left( -c_{l}t\left( \vert x \vert^{1+\alpha/2}+\vert y \vert^{1+\alpha/2}\right)\right)
\]
for all $x, y \in \mathbb{R}^{N}$ and $0<t\leqslant 1$.
\end{remark}

\begin{example}[Heisenberg group]\label{ex.4.1}  Let $H_{3}$ be the Heisenberg group identified with $\mathbb{R}^{3}$ with the multiplication given by
\[
(a , b , c)\cdot(a^{\prime}, b^{\prime}, c^{\prime}):=(a+a^{\prime}, b+b^{\prime}, c+c^{\prime}+\frac{1}{2}(a b^{\prime}-b a^{\prime})),
\]
and a Haar measure on $H_3$ then is the Lebesgue measure $dadbdc$ on $\mathbb{R}^3$.
 Let $\{X,Y,Z\}$ be the basis of the Lie algebra $\mathfrak{h}$ with the only non-zero bracket $[ X, Y ]=Z$. A polarizing (non-unique) sub-algebra for an element $l=\lambda Z^*$ can be chosen as $\mathfrak{m}=\operatorname{Span} \{ Y, Z \}$. Thus the corresponding subgroup is

 \[
 M=\exp{\mathfrak{m}}=\{\exp(bY+cZ),\ b,c \in \mathbb{R}\}
 \]
 identified with $\mathbb{R}^2$. The dual space of $H_3$ is given by
 \[
 \widehat{H}_3=\{\pi_{\lambda}, \ \lambda\in\mathbb{R}\}
 \]
where
\begin{eqnarray*}
\pi_{\lambda}\left( a, b, c\right) : L^2(\mathbb{R},\mathbb{C})& \longrightarrow & L^2(\mathbb{R},\mathbb{C})\\
f(x) &\longmapsto & e^{ 2\pi i \lambda(c+x b+\frac{ab}{2})}f(x+a).
\end{eqnarray*}
Then for any $f \in L^2(\mathbb{R},\mathbb{C})$

\begin{align*}
& \left( d\pi_{\lambda}\left( X \right)\right)f\left( x \right)= f^{\prime}\left( x \right),
\\
& \left( d\pi_{\lambda}\left( Y \right)\right)f\left( x \right)= 2\pi i \lambda x f\left( x \right),
\\
& \left( d\pi_{\lambda}\left( Z \right)\right)f\left( x \right)= 2\pi  i \lambda  f\left( x \right).
\end{align*}
The Plancherel measure on $\widehat{H}_3$ is $dP(\lambda)=|\lambda|d\lambda/4\pi^{2}$, where $d\lambda$ is the Lebesgue measure on $\mathbb{R}$. By using the ingredients above we are able to define the Fourier transform $\widehat{f}(\lambda)$ of a function $f\in L^2(H_3,\mathbb{C})$ as an operator on $L^2(\mathbb{R},\mathbb{C})$
\begin{align}
\left( \widehat{f}(\lambda)h\right)(x) &=\int_{\mathbb{R}^{3}} f(a,b,c)\left(\pi_{\lambda}(a,b,c)h\right)(x)dadbdc \label{e.3.19}
\\
&=\int_{\mathbb{R}^{3}} f(a,b,c)e^{ 2\pi i \lambda(-c-x b+\frac{ab}{2})}h(x-a)dadbdc, \hskip0.1in h \in L^2(\mathbb{R},\mathbb{C}). \notag
\end{align}
We can define a sub-Riemannian structure on $H_3$ by considering the two left invariant vector fields $\widetilde{X}=X(g), \widetilde{Y}=Y(g),\ g\in H_3$. Then the horizontal distribution is given by
\[
\mathcal{H}=\operatorname{Span} \{ \widetilde{X}, \widetilde{Y}\}
\]
and therefore the corresponding sub-Laplacian is
\[
\Delta_H = \widetilde{X}^2+\widetilde{Y}^2.
\]
The the Fourier transform $\Delta_H$ defined by \eqref{e.4.9} is

\[
\widehat{\Delta_{H}} \left( \pi_{\lambda} \right)f\left( x \right)=f^{\prime\prime}\left( x \right)-4\pi^2\lambda^{2}x^{2}f\left( x \right)=f^{\prime\prime}\left( x \right)-V_{\lambda}\left( x \right)f\left( x \right).
\]
Note that by using the global coordinates $(a,b,c)$ the sub-Laplacian is given by
\begin{align*}
\Delta_H f(a,b,c)&= \left(\widetilde{X}^2+\widetilde{Y}^2\right)f(a,b,c)\\
&=\left(\left(\partial_{a}-\frac{b}{2}\partial_{c}\right)^2+\left(\partial_{b}+\frac{a}{2}\partial_{c}\right)^2\right)f(a,b,c).
\end{align*}
 For $x, y \in \mathbb{R}$ we have
 \begin{align*}
 \gamma^{-1}(x)m\gamma(y) &=\exp(-xX)\exp(bY+cZ)\exp(yX)\\
&=(y-x, b, c-\frac{b}{2}(x+y)).
\end{align*}
The reduced heat kernel is then

\[
 k_t(x,y) =\int_M\chi_l(m)p_t(\gamma^{-1}(x)m\gamma(y))dm=\int_{\mathbb{R}^{2}} e^{ 2\pi i \lambda c} p_t(y-x, b, c-\frac{b}{2}(x+y))db dc.
\]
Observe that by \eqref{e.3.19} applied to $p_{t}$ and $h$ being the Dirac $\delta$ function (which can be made rigorous similarly to the proof of Theorem \ref{t.4.11}) we have

\begin{align*}
\left( \widehat{p_{t}}(\lambda)\delta\right)(x) &=\int_{\mathbb{R}^{3}} p_{t}(a,b,c)\left(\pi_{\lambda}(a,b,c)\delta\right)(x)dadbdc
\\
&=\int_{\mathbb{R}^{3}} p_{t}(a,b,c)e^{ 2\pi i \lambda(-c-x b+\frac{ab}{2})}\delta(x-a)dadbdc
\\
&=\int_{\mathbb{R}^{2}} p_{t}(x,b,c)e^{ 2\pi i \lambda(-c-\frac{xb}{2})}dbdc
\end{align*}
It is equal to the reduced heat kernel $k_{t}\left( 0, x \right)$ and taking the derivative in $t$ and  using the fact that $Y \in \mathfrak{m}$, and therefore $\widetilde{Y}$ is a skew-symmetric on $L^{2}\left( M, dm \right)$, we see that
\[
\partial_{t}k_t(0,x)=\widehat{\Delta_{H}} \left( \pi_{\lambda} \right)k_t(0,x)=\left( \frac{d^{2}}{dx^{2}}-V_{\lambda}\left( x \right)\right)k_t(0,x).
\]
Finally, in this case we can make \eqref{e.3.20} explicit. Namely, using the ingredients we described earlier, we see that the hypoelliptic heat kernel on $H_{3}$ is given by

\[
p_{t}\left( a, b, c \right)=\int\limits_{\mathbb{R}^{\times}} \int\limits_{ \mathbb{R}}
e^{2\pi i l\left( c +\frac{xb}{2} \right)}
k_{t}^{l}\left(  a+x, x \right)dx \vert l \vert dl,
\]
where $k_{t}^{l}\left( x, y \right)$ is the fundamental solution of Schr\"{o}dinger's equation with the generator

\[
\left( H_{l}f\right)\left( x \right)=\frac{d^{2}f}{dx^{2}}-l^{2} x^{2}f.
\]
In this case this heat kernel can be found explicitly, namely,

\begin{align*}
& k_{t}^{l}\left( x, y \right)=\left( \frac{l}{2\pi \sinh \left( 2lt \right)}\right)^{1/2}e^{-s_{t}^{l}\left( x, y \right)}, \text{ where }
\\
& s_{t}^{l}\left( x, y \right)=l\left( \left( x^{2}+y^{2} \right) \coth\left( 2lt \right) -\frac{xy}{2\sinh \left( 2lt \right)}\right)
\end{align*}
Thus we can use this form for $k_{t}^{l}$ to write an explicit expression for $p_{t}$

\[
p_{t}\left( a, b, c \right)=\int\limits_{0}^{\infty} \int\limits_{ \mathbb{R}}
2 l \cos \left(2\pi  l\left( c +\frac{xb}{2} \right)\right)
k_{t}^{l}\left(  a+x, x \right)dx  dl.
\]

\end{example}

\section{Hypoelliptic heat kernel on an $n$-step Lie group}\label{s4}

In this section we define a sub-Riemannian structure on the n-step Lie group $G_{n+1}$ described in Section \ref{ss2.1}, and then use the representations of $G_{n+1}$ to find an explicit expression for the corresponding hypoelliptic kernel.

In particular, we can use the matrix presentation \eqref{e.3.1} of the group $G_{n+1}$. We now introduce the isomorphism $\phi$ between $G_{n+1}$ and $\mathbb{R}^{n+1}$  by
\[
\phi(g)=(a,\mathbf{z}), \text{ where } \mathbf{z}:=\left( z_1,\dots, z_n \right),
\]
where $z$ is defined by \eqref{e.3.18}.
This isomorphism is a group isomorphism when $\mathbb{R}^{n+1}$ is endowed with the following product
\[
\left( a,\mathbf{z})\cdot(a',\mathbf{z}'\right):= \left( a+a',z_1+z_1',\dots,z_n+\sum_{i=0}^{n-1}\frac{a^i}{i!}z_{n-i}\right)
\]
Now, let us define a left-invariant sub-Riemannian structure on $G_{n+1}$ as presented in Section \ref{s3}. Consider two left invariant vector fields $X_1, X_2$ corresponding to $X, Y_1 \in \mathfrak{g}_{n+1}$ defined by \eqref{e.4.20}, and let
\begin{align*}
& \mathcal{H}(g):=\operatorname{Span}\{ X_1(g), X_2(g)\},
\\
& \mathbf{g}_g(X_i(g), X_j(g))=\delta_{ij}.
\end{align*}

Writing the group $G_{n+1}$ in coordinates $(a,\mathbf{z})\in \mathbb{R}^{n+1}$, we have the following expression for the left-invariant vector fields  $\widetilde{X_{1}}$ and $\widetilde{X_{2}}$
\begin{align*}
& \widetilde{X_1} =\frac{\partial}{\partial a},
\\
& \widetilde{X_2}=\frac{\partial}{\partial z_1}+a\frac{\partial}{\partial z_2}+\frac{a^2}{2!}\frac{\partial}{\partial z_3 }+\cdots+\frac{a^{n-1}}{(n-1)!}\frac{\partial}{\partial z_n}.
\end{align*}
The corresponding hypoelliptic Laplacian $\Delta_H$  on $G_{n+1}$ is given by
\[
\Delta_Hf=\left(\widetilde{X_1}^2+\widetilde{X_2}^2\right)f, \hskip0.1in f \in C_{c}^{\infty}\left( G_{n+1} \right).
\]
Our goal is to find an integral formula for the hypoelliptic heat kernel $p_{t}\left( g \right)$ as defined in Definition \ref{d.4.6} where $g$ is identified with a point in $\mathbb{R}^{n+1}$.

\begin{notation}\label{n.4.1} In what follows we denote by $\mathbb{R}^{n-1}_{\ast}$ the set

\[
\mathbb{R}^{n-1}_{\ast}:=\left\{\mathbf{\Lambda}_{n-1}=\left( \lambda_{1}, ..., \lambda_{n-1} \right) \in \mathbb{R}^{n-1}: \lambda_{n-1}\not=0  \right\}.
\]
\end{notation}
\begin{theorem}\label{t.5.1} The hypoelliptic heat kernel on the group $G_{n+1}$ is given by
\begin{align*}
& p_t(a, \mathbf{z})=
\\
& \int\limits_{\mathbb{R}^{n-1}}\int\limits_\mathbb{R} e^{2\pi i \left(\sum\limits_{k=1}^{n-1}\lambda_k B_k\left( x, \mathbf{z}\right)\right)}k_{t}^{\mathbf{\Lambda}_{n-1}}\left( x+a, x \right )d x |\lambda_{n-1}|d\mathbf{\Lambda}_{n-1},
\end{align*}
where $k_{t}^{\mathbf{\Lambda}_{n-1}}\left( x, y \right )$ is the fundamental solution to a Schr\"{o}dinger equation with a polynomial potential and
\[
B_k \left( x,\mathbf{z}\right) :=\sum_{i=1}^k\frac{z_i}{(k-i)!}x^{k-i},   k=1,2,\dots, {n-1},
\]
and

\[
d\mathbf{\Lambda}_{n-1}:=d\lambda_{1}...d\lambda_{n-1}
\]
is the Lebesgue measure.
\end{theorem}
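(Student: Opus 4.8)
The plan is to obtain the formula as a direct specialization of the general integral formula in Theorem \ref{t.4.11} to the group $G_{n+1}$, feeding in the explicit representation-theoretic data already assembled in Theorem \ref{MM} and Proposition \ref{p.3.13}. Recall that Theorem \ref{t.4.11} expresses
\[
p_{t}(g)=\int_{U\cap \mathfrak{g}^*_T} \int_{\mathbb{R}^{n-k}}\chi_{l}\left(\rho_{2}(\gamma(x)g)\right)k_{t}^{l}\left(\rho_{1}(\gamma(x)g), x\right)dx\,\vert\operatorname{Pf}(l)\vert\,dl,
\]
so the whole task is to evaluate each ingredient for $G_{n+1}$. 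First I would record the dimension count: the polarizing subalgebra is $\mathfrak{m}=\operatorname{Span}\{Y_1,\dots,Y_n\}$, so $k=n$ and $n-k=1$; hence the representations act on $L^{2}(\mathbb{R},dx)$ and the inner integral runs over $\mathbb{R}$, matching the $\int_\mathbb{R}dx$ in the claim.

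Next I would import the computations from the proof of Theorem \ref{MM}, where for $g=(\mathbf{b},a)$ and $\gamma(x)=(0,\dots,0,x)$ it was shown that $\rho_{1}(\gamma(x)g)=x+a$ and that $\chi_{l}(\rho_{2}(\gamma(x)g))=e^{2\pi i l(\sum_{k=1}^n B_k(x)Y_k)}$, with $B_k(x,\mathbf{z})=\sum_{i=1}^k\frac{z_i}{(k-i)!}x^{k-i}$ now made explicit in the coordinates $(a,\mathbf{z})$ of $g$. Substituting the generic orbit representative $l=\lambda_{n-1}Y_n^*+\lambda_{n-2}Y_{n-2}^*+\cdots+\lambda_1Y_1^*$ furnished by Proposition \ref{p.3.13} (so that $l(Y_{n-1})=0$) collapses the pairing $l(\sum_k B_k Y_k)$ into the stated linear combination of the $B_k(x,\mathbf{z})$ weighted by the $\lambda_k$. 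Proposition \ref{p.3.13} also supplies the outer measure: $U\cap\mathfrak{g}^*_T$ is identified with $\mathbb{R}^{n-1}_{\ast}$ and $\vert\operatorname{Pf}(l)\vert=\vert\lambda_{n-1}\vert$, which produces the factor $\vert\lambda_{n-1}\vert\,d\mathbf{\Lambda}_{n-1}$.

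It remains to identify $k_{t}^{\mathbf{\Lambda}_{n-1}}$ concretely. Here I would compute the derived representation on smooth vectors: differentiating $\pi_{l}(e^{tX})f(x)=f(x+t)$ gives $d\pi(X_1)=\tfrac{d}{dx}$, while differentiating the exponential cocycle along $e^{tY_1}$ shows that $d\pi(X_2)$ is multiplication by $2\pi i$ times a polynomial $P_l(x)$ of degree $n-1$ whose coefficients are the $\lambda_k$. By Theorem \ref{t.4.10} the operator $\widehat{\Delta_{H}}(\pi_{l})=d^2\pi(X_1)+d^2\pi(X_2)$ is then the Schr\"odinger operator $\tfrac{d^2}{dx^2}-V_l$ with the nonnegative polynomial potential $V_l(x)=(2\pi)^2P_l(x)^2$ of degree $2(n-1)$, so $k_{t}^{\mathbf{\Lambda}_{n-1}}$ is exactly its fundamental solution, as asserted. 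Assembling the three ingredients and noting, as in the proof of Theorem \ref{t.4.11}, that $k_{t}^{l}$ enters evaluated at $(\rho_{1}(\gamma(x)g),x)=(x+a,x)$ yields the displayed formula.

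The steps are individually routine once Theorems \ref{MM}, \ref{t.4.10}, \ref{t.4.11} and Proposition \ref{p.3.13} are in hand; the main obstacle is bookkeeping, specifically the careful computation of the derived representation $d\pi(X_2)$ and the resulting potential $V_l$, together with tracking the index shift between the coadjoint parametrization (where $\lambda_{n-1}$ is attached to $Y_n^*$) and the final exponential, so that the weights $\lambda_k$ align correctly with the polynomials $B_k(x,\mathbf{z})$.
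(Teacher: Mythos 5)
Your proposal is correct and follows essentially the same route as the paper: both specialize the general formula of Theorem \ref{t.4.11} to $G_{n+1}$ using the representations from Theorem \ref{MM}, the Plancherel measure $|\lambda_{n-1}|\,d\mathbf{\Lambda}_{n-1}$ from Proposition \ref{p.3.13}, and the computation of $d\pi(X_1)$, $d\pi(X_2)$ to identify $\widehat{\Delta_H}(\pi_l)$ as the Schr\"odinger operator $\frac{d^2}{dx^2}-V_{\mathbf{\Lambda}_{n-1}}$ whose fundamental solution is $k_t^{\mathbf{\Lambda}_{n-1}}$. Your closing remark about the index bookkeeping (with $\lambda_{n-1}$ attached to $Y_n^*$ and $l(Y_{n-1})=0$) correctly flags the one genuinely delicate point, which the paper glosses over.
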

\begin{remark} Note that the integral should be taken over $\mathbb{R}^{n-1}_{\ast}$, but as the integrand is $0$ when $\lambda_{n-1}=0$, we can instead integrate over $\mathbb{R}^{n-1}$.
\end{remark}
\begin{proof}
This formula can be derived using Theorem \ref{t.4.11}. Recall that we described all unitary representations of $G_{n+1}$  in Section \ref{s2}, and now we can use these results to apply Theorem \ref{t.4.11}. For this purpose it is enough to consider representations in the support of the Plancherel measure $P$. Thus we identify
\[
\widehat{G}_{n+1}\cong\left\{\pi_{\mathbf{\Lambda}_{n-1}}:  \mathbf{\Lambda}_{n-1}\in \mathbb{R}^{n-1}_{\ast}\right\},
\]
where $\pi_{\mathbf{\Lambda}_{n-1}}$ is a unitary operator on $L^{2}\left( \mathbb{R}, \mathbb{C} \right)$ defined by
\begin{align*}
\pi_{ \mathbf{\Lambda}_{n-1}}:  L^{2}\left( \mathbb{R}, \mathbb{C} \right) & \longrightarrow  L^{2}\left( \mathbb{R}, \mathbb{C} \right),
\\
f\left( x \right)& \longmapsto  \left( \pi^{\mathbf{\Lambda}_{n-1}}(a,\mathbf{z})f\right)\left( x \right),
\\
\left( \pi_{\mathbf{\Lambda}_{n-1}}(a,\mathbf{z})f\right)\left( x \right)& :=e^{2\pi i \left(\sum\limits_{k=1}^{n-1}\lambda_k B_k\left( x \right)\right)}f\left( x+a \right),
\end{align*}
where polynomials $B_k\left( x \right)$ are defined by \eqref{e.3.16}. Recall that by \eqref{e.3.5} the Plancherel measure on $\widehat{G}_{n+1}$ is identified with  the following measure on $\mathbb{R}^{n-1}$

\[dP(\mathbf{\Lambda}_{n-1})=\vert \operatorname{Pf}\left( l \right)\vert dm_{T}\left( l\right)=|\lambda_{n-1}|d\mathbf{\Lambda}_{n-1}.
\]

Consider the representation $\pi_{\mathbf{\Lambda}_{n-1}}$ acting on the representation space $L^2(\mathbb{R},\mathbb{C})$.  Using Definition \ref{d.4.9} we consider $d\pi_{\mathbf{\Lambda}_{n-1}}\left( X_{i} \right)$, $i=1, 2$ which are operators  on $L^2(\mathbb{R},\mathbb{C})$, which can be found explicitly as follows. Let $\mathbf{\Lambda}_{n-1}\in \mathbb{R}^{n-1}_{\ast}$, then
\begin{align*}
\left[d\pi_{\mathbf{\Lambda}_{n-1}}\left( X_{1} \right) f\right](x)&=\frac{d}{dt}\bigg|_{t=0}\pi_{\mathbf{\Lambda}_{n-1}}(e^{tX_{1}})f\left( x \right)
\\
&=\frac{d}{dt}\bigg|_{{t=0}}\pi_{\mathbf{\Lambda}_{n-1}}(t,0,\dots,0)f\left( x \right)
\\
&= \frac{d}{dt}\bigg|_{{t=0}}f\left( t+x \right)=f^{\prime}\left( x \right),
\end{align*}

\begin{align*}
\left[d\pi_{ \mathbf{\Lambda}_{n-1}}\left( X_{2} \right)f\right]\left( x \right)&=\frac{d}{dt}\bigg|_{{t=0}}\pi_{\mathbf{\Lambda}_{n-1}}(e^{tX_{2}})f\left( x \right)
\\ &=\frac{d}{dt}\bigg|_{{t=0}}\pi_{\mathbf{\Lambda}_{n-1}}(0,t,\dots, 0)f\left( x \right)
\\
&=2\pi i \left(\lambda_{n-1}\frac{x^{n-1}}{(n-1)!}+\sum_{j=1}^{n-2}\lambda_j\frac{x^{j-1}}{(j-1)!}\right)f\left( x \right)
\end{align*}
Thus, under the GFT of the hypoelliptic Laplacian (as in Theorem \ref{t.4.10}) is
\[
\widehat{\Delta_{H} f}\left(\pi_{\lambda_{n-1},\mathbf{\Lambda}_{n-2}} \right)\left( x \right)=\frac{d^2f}{dx^2}-4\pi^2\left(\lambda_{n-1}\frac{x^{n-1}}{(n-1)!}+\sum_{j=1}^{n-2}\lambda_j\frac{x^{j-1}}{(j-1)!}\right)^2 f\left( x \right).
\]
This is a Schr\"{o}dinger operator with a polynomial potential
\[
\mathcal{L}_{\mathbf{\Lambda}_{n-1}}:=\frac{d^2f}{dx^2}-V_{ \mathbf{\Lambda}_{n-1}}\left( x \right)f\left( x \right),
\]
where

\begin{equation}\label{e.5.1}
 V_{\mathbf{\Lambda}_{n-1}}\left( x \right):=4\pi^2\left(\frac{\lambda_{n-1}}{(n-1)!}x^{n-1}+\sum_{j=1}^{n-2}\frac{\lambda_j}{(j-1)!} x^{j-1}\right)^2.
\end{equation}
Denote by $k_{t}^{\mathbf{\Lambda}_{n-1}}\left(  x, y \right)$ the fundamental solution for the operator $\partial_t -\mathcal{L}_{\mathbf{\Lambda}_{n-1}}$. Then applying Theorem \ref{t.4.11}, one gets the kernel of the hypoelliptic heat equation on $G_{n+1}$ as follows
\begin{align*}
& p_t(a, \mathbf{z})=
\\
& \int\limits_{\mathbb{R}^{n-1}}\int\limits_\mathbb{R} e^{2\pi i \left(\sum\limits_{k=1}^{n-1}\lambda_k B_k\left( x \right)\right)}k_{t}^{\mathbf{\Lambda}_{n-1}}\left(  x+a, x \right )d x |\lambda_{n-1}|d\mathbf{\Lambda}_{n-1}.
\end{align*}
\end{proof}


\section{Short-time behavior of the hypoelliptic heat kernel and the Trotter product formula}\label{s6}
In this section we recall the techniques used by S\'eguin and Mansouri in \cite{SeguinMansouri2012}, where they show how the Trotter product formula for perturbation of a semigroup can be combined with an explicit formula for the hypoelliptic heat kernel to study the short-time behaviour of this heat kernel.

The following version of the Trotter product formula is well-suited for our purposes. Suppose  $C$ is an operator that can be written as a sum of two operators $C=A+B$. Then we can relate the semigroups generated by $A$ and $B$ with the semigroup generated by $C$ as follows (see \cite[Corollary 5.8]{EngelNagelBook}).
\begin{theorem}[Trotter product formula]
Let $(T_{t})_{t\geqslant 0}$ and $(S_{t})_{t\geqslant 0}$ be strongly continuous semigroups on a Banach space $X$ satisfying the stability condition
\[
\Vert [T_{t/N}S_{t/n}]^{N}\Vert\leqslant Me^{wt},  \text{ for all }t\geqslant 0,  N\in \mathbb{N}
\]
for some constants $M\geqslant 1, w \in \mathbb{R}$. Consider the sum $A+B$ on $D := D(A)\cap D(B)$ of the generators $(A,D(A))$ of $(T _{t})_{t\geqslant 0}$ and $(B, D(B))$ of $(S_{t})_{t\geqslant 0}$, and assume that $D$ and $(\lambda_0- A-B)D$ are dense in $X$ for some $\lambda_0\geqslant w$. Then the closure of the sum of these two operators $C := A + B$ generates a strongly continuous semigroup $(U_{t})_{t\geqslant 0}$ given by the Trotter product formula
\begin{equation}\label{ut}
U_{t}x = \lim_{N\rightarrow \infty} [T_{t/N}S_{t/N}]^{N}x
\end{equation}
with uniform convergence for $t$ on compact intervals.
\end{theorem}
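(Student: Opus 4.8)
The plan is to reduce the statement to the \emph{Chernoff product formula}, which is the standard engine behind all Trotter-type results. I would set $V(t) := T_{t}S_{t}$ for $t\geqslant 0$. This is a strongly continuous family of bounded operators on $X$ with $V(0)=I$, and the stability hypothesis is exactly the bound $\Vert V(t/N)^{N}\Vert\leqslant Me^{wt}$ required by Chernoff. The whole argument then hinges on two points: identifying the strong derivative $V'(0)$ on the core $D$, and verifying that the operator it produces has a closure generating a strongly continuous semigroup.

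For the derivative, I would fix $x\in D=D(A)\cap D(B)$ and split
\[
\frac{V(t)x-x}{t}=T_{t}\left(\frac{S_{t}x-x}{t}\right)+\frac{T_{t}x-x}{t}.
\]
As $t\downarrow 0$ the second term converges to $Ax$ since $x\in D(A)$. For the first term I would write $T_{t}\bigl(\tfrac{S_{t}x-x}{t}\bigr)=T_{t}\bigl(\tfrac{S_{t}x-x}{t}-Bx\bigr)+T_{t}Bx$; the operators $T_{t}$ are uniformly bounded for $t$ near $0$ (as a strongly continuous semigroup is bounded by $M'e^{w't}$ on compact intervals), so the first summand vanishes because $\tfrac{S_{t}x-x}{t}\to Bx$, while $T_{t}Bx\to Bx$ by strong continuity. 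Hence $V'(0)x=(A+B)x$ for every $x\in D$, so the operator extracted from $V$ is precisely $C=A+B$ on $D$.

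Next I would establish that $\overline{C}$ is a generator; here the two density hypotheses do the work. Density of $D$ makes $C$ densely defined, while the range condition that $(\lambda_{0}-A-B)D$ is dense for some $\lambda_{0}\geqslant w$ supplies, together with the stability bound $Me^{wt}$, the resolvent-range requirement of the Hille--Yosida/Lumer--Phillips generation theorem; thus $\overline{C}$ generates a strongly continuous semigroup $(U_{t})_{t\geqslant 0}$ with $\Vert U_{t}\Vert\leqslant Me^{wt}$. With all the hypotheses of Chernoff's theorem now in place, the Chernoff product formula yields $V(t/N)^{N}x\to U_{t}x$ strongly, uniformly for $t$ in compact intervals, which is exactly \eqref{ut}.

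The main obstacle is the Chernoff convergence itself. If I wanted a self-contained proof rather than the citation to \cite[Corollary 5.8]{EngelNagelBook}, I would prove it in two stages. First, \emph{Chernoff's lemma}, which in the contractive case gives the estimate $\Vert(S^{N}-e^{N(S-I)})x\Vert\leqslant\sqrt{N}\,\Vert(S-I)x\Vert$ and, under the general $Me^{wt}$ stability bound, yields the corresponding control on $V(t/N)^{N}-e^{N(V(t/N)-I)}$. Second, a Trotter--Kato approximation argument showing that the semigroups $e^{N(V(t/N)-I)}$ converge to $U_{t}$, using that $N(V(t/N)-I)x\to\overline{C}x$ on the core $D$. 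Managing the interplay between these two approximations --- the passage from $V(t/N)^{N}$ to $e^{N(V(t/N)-I)}$ and then to $e^{t\overline{C}}$ --- is where the genuine analytic difficulty lies; by comparison, the derivative computation and the generation step are routine once the local boundedness of $T_{t}$ is handled with care.
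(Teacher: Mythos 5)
The paper does not prove this theorem at all; it simply quotes it from \cite[Corollary 5.8]{EngelNagelBook}. Your reduction to Chernoff's product formula --- identifying $V'(0)x=(A+B)x$ on $D$ via the splitting $T_{t}S_{t}x-x=T_{t}(S_{t}x-x)+(T_{t}x-x)$, invoking the density and range conditions, and then running Chernoff's lemma together with a Trotter--Kato approximation --- is precisely the argument given in that cited reference, and it is correct.
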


As  S\'eguin and Mansouri observed, in most of cases the expression $[T(t/N)S(t/N)]^{N}$ is too complicated to be described explicitly. But in the special case when the operators satisfy some additional conditions and the Banach space $X$ is a nice function space such as $L^{p}, \leqslant p < \infty$, (see \cite[Section 3.3]{SeguinMansouri2012} for details), \eqref{ut} takes the form
\[
U_{t}f = T_{t}f +\lim_{N\rightarrow \infty}\frac{t}{N}\left(\sum_{k=0}^{N-1}T_{t/N}^{N-k}BT_{t/N}^{k}\right)f+\mathcal{O}(t^2)f,
\]
where  $\mathcal{O}(t^2)$ is an operator $D_t$ acting on $f$ such that $\Vert D_tf\Vert_{L^{1}}\leqslant Mt^2\Vert f\Vert_{L^{1}}$ for a constant $M$ and for all $t$ small enough.

Note that their main result \cite[Theorem 1]{SeguinMansouri2012} uses a number of assumptions formulated on \cite[p.3904]{SeguinMansouri2012}. Some of them are ambiguous (such as Assumption 1 which does not address the issue of the domains of unbounded operators), therefore we would like to use more concrete decompositions we have as a result of using Kirillov's orbit method. Namely, we consider the semigroup with the generator $\widehat{\Delta_{H}}\left( \pi_{l, \mathfrak{m}}\right)$ on $L^{2}\left( \mathbb{R}^{n-k}, dx \right)$ defined by \eqref{e.4.9}.  That is, we would like to write $\widehat{\Delta_{H}}\left( \pi_{l, \mathfrak{m}}\right)$ as a sum $A^{l}+B^{l}$ which are generators of the semigroups $\left\{T^{l}_{t}\right\}_{t\geqslant 0}$ and $\left\{S^{l}_{t}\right\}_{t\geqslant 0}$  respectively,  and which satisfy the following assumptions similar to \cite[Proposition 3]{SeguinMansouri2012}. Note that they base their analysis on the terminology of \cite{AgrachevBoscainGauthierRossi2009} which in particular leads to potential issues with \cite[Theorem 1]{SeguinMansouri2012}. But in our setting the direct integral terminology is not necessary, and in particular, all operators and their semigroups are defined on $L^{2}\left( \mathbb{R}^{n-k}, dx \right)$ , that is, on a space which does not depend on $\pi_{l, \mathfrak{m}}$. That is, in the case of nilpotent groups dependence on elements in the unitary dual $\widehat{G}$ is much easier to track.

\begin{enumerate}

\item  For each $t\geqslant 0$, $T^{l}_{t}$ is an integral operator with kernel $h_{t}^{l}( x, y)$
\[
\left( T^{l}_{t}f\right)(x)=\int_{\mathbb{R}^{n-k}}h_{t}^{l}( x, y) f(y)dy,
\]
\item There exists an integrable function $H_{t/N}^{l}(x,y)$, uniformly bounded in $x$ by an integrable function $G^{l}_{t}( x, y)$ for all $N\geqslant 1$, such that
\[
(t/N)\left(\left(T^{l}_{t/N}\right)^{N}B^{l}+...+T^{l}_{t/N}B^{l} \left(T^{l}_{t/N}\right)^{N-1}\right)f(x)
=\int_{\mathbb{R}^{n-k}}H^{l}_{t/N}(x,y) f(y)dy.
\]
\end{enumerate}
Then \cite[Theorem 1]{SeguinMansouri2012} can be interpreted as follows. The operator $\widehat{\Delta_{H}}\left( \pi_{l, \mathfrak{m}}\right)$ is the generator of the semigroup $e^{t\widehat{\Delta_{H}}\left( \pi_{l, \mathfrak{m}}\right)}$ which is an integral operator on $L^{2}\left( \mathbb{R}^{n-k}, dx \right)$  which can be written as the following expression for all small enough $t$

\begin{align}
& \left(e^{t \widehat{\Delta_{H}}\left( \pi_{l, \mathfrak{m}}\right)}f\right)(x)=\label{useful}
\\
&\int_{\mathbb{R}^{n-k}}\left(h_{t}^{l}( x, y)+t \lim_{N\rightarrow \infty}H_{t/N}^{l}( x, y)\right)f(y)dy+\left( \mathcal{O}(t^{3/2})f \right)(x), \notag
\end{align}
where the integral kernels $h_{t}^{l}$ and $H_{t}^{l}$ are defined above, and as before $\mathcal{O}(t^{3/2})f$ is an operator $D_t$ acting on $f$, such that $\Vert D_tf\Vert_{L^{1}} \leqslant Mt^2\Vert f\Vert_{L^{1}}$ for a constant $M$ and for all small enough $t$. Combining this with the reduced heat kernel introduced in Remark \ref{r.4.12}, we see that

\[
k^{l}_{t}\left( x, y \right)= h_{t}^{l}( x, y)+t \lim_{N\rightarrow \infty}H_{t/N}^{l}( x, y)+ \mathcal{O}(t^2).
\]
Recall that for the Heisenberg group $H_{3}$ and the group $G_{n+1}$

\[
\widehat{\Delta_{H}}\left( \pi_{l, \mathfrak{m}}\right)=\Delta-V_{l},
\]
where $V_{l}$ is a positive polynomial potential on $L^{2}\left( \mathbb{R}, dx \right)$. Let

\[
\left( A^{l}f \right)(x):=\Delta f(x)=f^{\prime \prime}(x)
\]
and $B^{l}$ be the multiplication operator
\[
\left( B^{l} f\right)\left( x \right):=V_{l}\left( x \right)f\left( x \right).
\]
We can write $V_{l}\left( x \right)$ as
\[
V_{l}\left( x \right)=-\left( a^{l}_{2m}x^{2m} +a^{l}_{2m-1}x^{2m-1} +\cdots+a^{l}_2x^2 +a^{l}_1x^1 +a^{l}_0 \right),
\]
where $m$ is the degree of the polynomial $V_{l}\left( x \right)$, and $a^{l}_{2m}$ and $a^{l}_0$ are positive constants. Then \cite[Corollary 2]{SeguinMansouri2012} is applicable, the equation \eqref{useful} becomes

\begin{align}
& \left( e^{t \widehat{\Delta_{H}}\left( \pi_{l, \mathfrak{m}}\right)}f \right)\left( x \right)= \label{e.6.3}
 \\
 & \int_\mathbb{R}\frac{1}{\sqrt{4\pi t}}e^{-\frac{(x-y)^2}{4t}}\left(1-a_0^{l} t-t\sum^{2m}_{k=1}\frac{a_k^{l}}{k+1}\sum_{i=0}^{k}x^iy^{k-i}\right)f(y)dy +\mathcal{O}(t^{3/2})f(x) \notag
\end{align}
for $f \in C_c^\infty(\mathbb{R})$. It is clear that for more general Schr\"{o}dinger-like operators in Theorem \ref{e.3.20} this approach can be used as well without making additional assumptions.


\subsection{Application to the $n$-step nilpotent Lie group $G_{n+1}$}
Recall that for $G_{n+1}$ the operator $\widehat{\Delta_{H}}$ is

\[
\left( \widehat{\Delta_{H}}\left( \pi_{l, \mathfrak{m}}\right)\right)f\left( x \right)=f^{\prime \prime }\left( x \right)-V_{l}\left( x \right)f\left( x \right), f \in C_{c}\left( G_{n+1}\right),
\]
where by \eqref{e.5.1} the potential is given by

\[
 V_{\mathbf{\Lambda}_{n-1}}\left( x \right)=4\pi^2\left(\frac{\lambda_{n-1}}{(n-1)!}x^{n-1}+\sum_{j=1}^{n-2}\frac{\lambda_j}{(j-1)!} x^{j-1}\right)^2.
\]
We  will use the following notation

 \[
 \mathbf{\Lambda}_k:= (\lambda_1,\lambda_2,\dots,\lambda_k).
 \]
Then as we mentioned previously
\begin{align*}
& A^{{\mathbf{\Lambda}_{n-1}}}f\left( x \right)=f^{\prime \prime }\left( x \right),
\\
&  \left( B^{{\mathbf{\Lambda}_{n-1}}}f \right)\left( x \right)=-\left(\lambda_n\frac{x^{n-1}}{(n-1)!}+\sum_{j=1}^{n-2}\lambda_j\frac{x^{j-1}}{(j-1)!}\right)^2f\left( x \right).
\end{align*}
Therefore the coefficients of the polynomial potential are
 \[
a_k^{{\mathbf{\Lambda}_{n-1}}}:=\sum^{k+1}_{i=1}\lambda_i\lambda_{k+2-i},  \  \  k=0,1,\dots,2(n-1).
\]
Finally, we use \eqref{e.6.3} to see that

\begin{align*}
& \left( e^{\widehat{\Delta_{H}}^{ {\mathbf{\Lambda}_{n-1}}}}f\right)\left( x \right) =
\\
& \int_\mathbb{R}\frac{1}{\sqrt{4\pi t}}e^{-\frac{(x-y)^2}{4t}}\left(1-\lambda_1^2 t-t\sum\limits_{k=1}^{2(n-1)}\frac{\sum\limits_{l=1}^{k+1}\lambda_l\lambda_{k+2-l}}{k+1} \sum\limits_{i=0}^{k}x^iy^{k-i}\right)f(y)dy
\\
& +\left(\mathcal{O}(t^{3/2})f\right)(x)
\end{align*}
with the reduced heat kernel being
\begin{align*}
&k^{{\mathbf{\Lambda}_{n-1}}}_t(x,y)=
\\
 &\frac{1}{\sqrt{4\pi t}}e^{-\frac{(x-y)^2}{4t}}\left(1-\lambda_1^2 t-t\sum_{k=1}^{2(n-1)}\frac{\sum_{l=1}^{k+1}\lambda_l\lambda_{k+2-l}}{k+1}\sum_{i=0}^{k}x^iy^{k-i}\right)+\mathcal{O}(t^{3/2})
\\
&=\frac{1}{\sqrt{4\pi t}}e^{-\frac{(x-y)^2}{4t}}\left(1-\lambda_1^2 t-t\sum_{k=1}^{2(n-1)}\frac{\sum_{l=1}^{k+1}\lambda_l\lambda_{k+2-l}}{k+1}\sum_{i=0}^{k}x^iy^{k-i}+\mathcal{O}(t^2)\right),
\end{align*}
where we used that $\mathcal{O}(t^{-1/2})\mathcal{O}(t^2) = \mathcal{O}(t^{3/2})$.

Now we can use Theorem \ref{t.5.1} \textcolor[rgb]{1.00,0.00,0.00}{ to see } that the hypoelliptic kernel is given by
\[
p_t(a, \mathbf{z})=\int\limits_{\mathbb{R}^{n-1}}\int\limits_\mathbb{R} e^{2\pi i \left(\sum\limits_{k=1}^{n-1}\lambda_k B_k\left( x ,\mathbf{z}\right)\right)}|\lambda_{n-1}|k_{t}^{\mathbf{\Lambda}_{n-1}}\left( x+a, x \right )d x d\mathbf{\Lambda}_{n-1},
\]
therefore
\begin{align*}
p_t(a,\mathbf{z})&=\frac{1}{\sqrt{4\pi t}}e^{-\frac{a^{2}}{4t}}\int_{\mathbb{R}^{n}}e^{2\pi i\left(\sum\limits_{k=1}^{n-1}\lambda_kB_k\right)}\times
\\
&\times\left[1-\lambda_1^2 t-t\sum_{k=1}^{2(n-1)}\frac{\sum\limits_{l=1}^{k+1}\lambda_l\lambda_{k+2-l}}{k+1}\sum_{i=0}^{k}(x+a)^ix^{k-i}+\mathcal{O}(t^2)\right]
\\
& dx|\lambda_{n-1}|d\mathbf{\Lambda}_{n-1}.
\end{align*}
Denote by $P_{2(n-1)}$ the polynomial of degree $2(n-1)$
\[
P_{2(n-1)}(x,a):=\lambda_1+\sum_{k=1}^{2(n-1)}\frac{\sum_{l=1}^{k+1}\lambda_l\lambda_{k+2-l}}{k+1}\sum_{i=0}^{k}(x+a)^ix^{k-i}.
\]
Choosing the substitution
\[
1-tP_{2(n-1)}(x,a)=e^{-tP_{2(n-1)}(x,a)}+\mathcal{O}(t^2),
\]
we get the following expression

\begin{align*}
&p_t(a,\mathbf{z})=\frac{1}{\sqrt{4\pi t}}e^{-\frac{a^2}{4t}} \times
\\
& \int_{\mathbb{R}^{n}}e^{2\pi i(\sum\limits_{k=1}^{n-1}\lambda_kB_k)}\left(e^{-tP_{2(n-1)}(x,a)}+\mathcal{O}(t^2)\right) dx|\lambda_{n-1}|d\lambda_{n-1}d\mathbf{\Lambda}_{n-2}=
\\
&\frac{1}{\sqrt{4\pi t}}e^{-\frac{a^2}{4t}}\int_{\mathbb{R}^{n}}e^{2\pi i(\sum\limits_{k=1}^{n-1}\lambda_kB_k)}e^{-tP_{2(n-1)}(x,a)}dx|\lambda_{n-1}|d\lambda_{n-1}d\mathbf{\Lambda}_{n-2}
\\
&+\frac{1}{\sqrt{4\pi t}}e^{-\frac{a^2}{4t}}\int_{\mathbb{R}^{n}}e^{2\pi i(\sum\limits_{k=1}^{n-1}\lambda_kB_k)}\mathcal{O}(t^2)dx|\lambda_{n-1}|d\lambda_{n-1}d\mathbf{\Lambda}_{n-2}.
\end{align*}
The first part is a $C^\infty$ function, and one can use special functions to find a more explicit form of this integral. The second integral is also a  $C^\infty$ function which is of order $\mathcal{O}(t^{3/2})$.

\providecommand{\bysame}{\leavevmode\hbox to3em{\hrulefill}\thinspace}
\providecommand{\MR}{\relax\ifhmode\unskip\space\fi MR }
\providecommand{\MRhref}[2]{%
  \href{http://www.ams.org/mathscinet-getitem?mr=#1}{#2}
}
\providecommand{\href}[2]{#2}

\end{document}